\documentclass[a4paper,11pt]{article}
\usepackage{amsmath,amssymb,amsfonts,amsthm}
\usepackage{bm}
\usepackage[title]{appendix}
\usepackage{caption}
\usepackage{graphicx,subfig}
\usepackage{color,eucal,enumerate,mathrsfs}
\usepackage[normalem]{ulem}
\usepackage[pagewise]{lineno}
\usepackage[colorlinks=true,linkcolor=blue,anchorcolor=blue,citecolor=blue,urlcolor=blue]{hyperref}

\numberwithin{equation}{section}
\theoremstyle{plain}
\newtheorem{theorem}{Theorem}[section]
\newtheorem{corollary}[theorem]{Corollary}
\newtheorem{lemma}[theorem]{Lemma}

\newtheorem{proposition}[theorem]{Proposition}
\theoremstyle{definition}

\theoremstyle{remark}
\newtheorem{assumption}{Assumption}[section]
\newtheorem{remark}[theorem]{Remark}

\newcommand{\dist}{\mathsf d}
\newcommand{\Lip}{\operatorname{Lip}}
\newcommand{\Var}{\operatorname{Var}}
\newcommand{\diam}{\operatorname{diam}}
\newcommand{\Per}{\operatorname{Per}}

\newcommand{\E}{\mathbb{E}}
\newcommand{\N}{\mathbb{N}}

\newcommand{\K}{\mathrm{K}}

\renewcommand{\d}{{\mathrm d}}
\newcommand{\mm}{\mathfrak m}

\newcommand{\restr}[1]{\lower3pt\hbox{$|_{#1}$}}

\title{Quantitative Stability of Wasserstein Barycenters over Alexandrov Spaces with Lower Curvature Bounds}

\begin{document}
\author{Bang-Xian Han\thanks{School of Mathematics, Shandong University, Jinan, China. Email: hanbx@sdu.edu.cn.}
\and Zhuo-Nan Zhu\thanks{School of Mathematical Sciences, University of Science and Technology of China, Hefei, China. Email: zhuonanzhu@mail.ustc.edu.cn}}

\date{\today} 
\maketitle

\begin{abstract}
We prove quantitative stability estimates for Wasserstein barycenters on Alexandrov spaces with curvature bounded from below. The proof combines the variational strategy of Carlier--Delalande--M\'erigot with heat-kernel regularization, which supplies the regularity needed for dual convexity arguments in this non-smooth curved setting. The main result is an explicit strong-convexity modulus for the barycentric variance functional. As a consequence, barycenters depend H\"older-continuously on the underlying distributions with respect to the $1$-Wasserstein distance on the space of probability measures. We derive empirical-barycenter consistency and entropy-based sample-complexity bounds. Our proof does not rely on linear structure; in particular, the resulting estimates appear to be new even on smooth compact Riemannian manifolds.
\end{abstract}

\textbf{Keywords}: Wasserstein barycenter, Alexandrov space, optimal transport, heat-kernel regularization, empirical measure, quantitative stability

\textbf{MSC 2020}: 49Q22, 53C23, 60B10
\setcounter{tocdepth}{1}

\tableofcontents

\section{Introduction}

Wasserstein barycenters provide a geometrically intrinsic notion of averaging probability measures. Since the work of Agueh and Carlier~\cite{zbMATH05956494}, they have become a basic tool in optimal transport and its applications; see, for instance~\cite{Santambrogio2015,PeyreCuturi2019}. If $\mathbb{P}$ is a probability measure on $\mathcal{P}(\Omega)$, a barycenter of $\mathbb{P}$ is a minimizer of the variance functional
\[
\Var_{\mathbb{P}}(\mu)=\int_{\mathcal{P}(\Omega)} W_2^2(\mu,\rho)\,\d\mathbb{P}(\rho).
\]
Throughout the paper, 
\begin{equation}\label{KD}
W_2^2(\mu,\rho):=\sup_{\phi(x)+\psi(y)\leq c(x,y)}
\left\{\int_{\Omega}\phi\,\d\rho+\int_{\Omega}\psi\,\d\mu\right\},
\end{equation}
with $c(x,y)=\frac12\dist^2(x,y)$. This convention differs from the usual squared $L^2$-Wasserstein distance only by the harmless factor $1/2$. All Wasserstein distances below use this normalization. Optimal pairs $(\phi,\psi)$ in \eqref{KD}, with $\phi=\psi^c$, are called Kantorovich potentials.

Barycenters in Wasserstein space arise naturally in statistics, machine learning, image processing, shape analysis and partial differential equations; representative computational and applied developments include~\cite{RabinPeyreDelonBernot2012,CuturiDoucet2014,AltschulerBoixAdsera2021,PanaretosZemel2019,PeyreCuturi2019}. In many of these problems the ambient space is not a smooth Euclidean domain. Quotient spaces, shape spaces, stratified spaces, phylogenetic tree spaces and measured Gromov--Hausdorff limits all lead naturally to singular metric geometries. This motivates the study of Wasserstein barycenters over Alexandrov spaces with curvature bounded from below, a class that contains genuine singularities while retaining a rich first-order calculus and a well-developed optimal-transport theory~\cite{BuragoBuragoIvanov2001,zbMATH00606998,zbMATH05342782,zbMATH05349267,zbMATH06797407}.

Existence, uniqueness and absolute continuity of Wasserstein barycenters are well understood in Euclidean spaces~\cite{zbMATH05956494}, on Riemannian manifolds~\cite{KimPass2017}, over Alexandrov spaces~\cite{Ohta2012,zbMATH06797407}, and in more general metric-measure settings~\cite{LeGouicLoubes2017,arXiv:2412.01190}. Quantitative stability is subtler. One asks whether a perturbation of the second-order law $\mathbb{P}\in\mathcal{P}(\mathcal{P}(\Omega))$ forces a controlled perturbation of its barycenter. In Euclidean domains, Carlier, Delalande and M\'erigot~\cite{zbMATH07819904} proved a H\"older stability estimate by combining dual strong convexity, a global zero-order balance condition and interpolation estimates. Their argument relies essentially on linear and Euclidean structures, including the representation of optimal maps by gradients of convex functions and global McCann interpolation.

The main difficulty addressed in this paper is to replace these tools in a curved, non-smooth setting. On a smooth Riemannian manifold, optimal maps are written through the exponential map applied to gradients of $c$-concave potentials, and curvature and cut-locus effects enter the analysis. On Alexandrov spaces, smooth differential tools are unavailable, and even first-order objects must be handled through the metric calculus of semiconcave functions. Our strategy is to combine the variational framework of~\cite{zbMATH07819904} with the heat kernel regularization method developed by the authors in~\cite{arXiv:2602.19175}. The heat flow, whose analytic behavior on Alexandrov spaces is compatible with the curvature lower bound~\cite{GigliKuwadaOhta2013}, acts as a geometry-adapted mollifier for Kantorovich potentials. This allows us to recover the regularity needed for the dual convexity argument without relying on linear structure.

\paragraph{Contributions.}
The paper establishes a deterministic quantitative stability theory for Wasserstein barycenters over compact domains in Alexandrov spaces with arbitrary lower curvature bound. More precisely:
\begin{enumerate}[{\rm (i)}]
\item we prove a quantitative strong-convexity estimate for the single-target optimal-transport functional when the target is a good measure in the sense of Assumption~\ref{assump};
\item we derive a H\"older-type stability theorem for the barycenter map $\mathbb{P}\mapsto\mu_{\mathbb{P}}$ with respect to the $W_1$ distance on $\mathcal{P}(\mathcal{P}(\Omega))$;
\item we show how heat-kernel regularization and the Alexandrov first-order calculus yield a replacement for the Euclidean interpolation estimates used in~\cite{zbMATH07819904};
\item we develop statistical consequences, including empirical-barycenter consistency and sample-complexity estimates under metric entropy.
\end{enumerate}
To our knowledge, the resulting quantitative barycenter-stability estimates are new even for smooth Riemannian manifolds.

\subsection{Main quantitative stability results}\label{subsec:main-results}

Let $(X,\dist)$ be an $n$-dimensional Alexandrov space with curvature bounded from below by $k$, and let $\mathcal{H}^n$ be the $n$-dimensional Hausdorff measure. We fix a compact domain $\Omega\subseteq X$. For a probability measure $\nu$ and an integrable function $f$, we write $\E_\nu(f):=\int f\,\d\nu$. If $\mathbb{P},\mathbb{Q}\in\mathcal{P}(\mathcal{P}(\Omega))$, then $W_1(\mathbb{P},\mathbb{Q})$ denotes the $1$-Wasserstein distance on $\mathcal{P}(\mathcal{P}(\Omega))$ induced by the ground distance $W_2$ on $\mathcal{P}(\Omega)$.

We shall use the following uniform good-measure condition.

\begin{assumption}\label{assump}
There exist constants $\alpha_\mathbb{P}\in(0,1]$, $m_\mathbb{P},M_\mathbb{P},\eta_\mathbb{P},\Per_\mathbb{P}>0$ and a Borel set $S_\mathbb{P}\subseteq\mathcal{P}(\Omega)$ with $\mathbb{P}(S_\mathbb{P})\geq\alpha_\mathbb{P}$ such that every $\rho\in S_\mathbb{P}$ is a \emph{good measure} in the following uniform sense:
\begin{itemize}
\item $\rho$ is concentrated on a bounded open John domain $S_\rho\subseteq\Omega$ with parameter $\eta_\mathbb{P}$, namely $\rho(\Omega\setminus S_\rho)=0$ and there is a point $x_0\in S_\rho$ such that, for every $x\in S_\rho$, there exists a rectifiable curve $\gamma:[0,\ell(\gamma)]\to S_\rho$, parametrized by arclength from $x$ to $x_0$, satisfying
\[
\dist(\gamma(t),S_\rho^c)\geq \eta_\mathbb{P}t,\qquad t\in[0,\ell(\gamma)].
\]
\item $S_\rho$ has finite perimeter in $X$ and $\Per(S_\rho)\leq\Per_\mathbb{P}$;
\item as measures on $\Omega$,
\[
 m_\mathbb{P}\mathcal{H}^n\llcorner S_\rho\leq \rho\leq M_\mathbb{P}\mathcal{H}^n\llcorner S_\rho.
\]
\end{itemize}
\end{assumption}

\begin{remark}[Role of the good-measure assumption]
Assumption~\ref{assump} is imposed only on a positive-mass subfamily of target measures. Thus the reference law $\mathbb{P}$ may contain singular, discrete, or otherwise irregular targets outside $S_\mathbb{P}$. The uniformity on $S_\mathbb{P}$ is used to obtain a quantitative dual convexity modulus, while the positive mass $\alpha_\mathbb{P}$ transfers this single-target convexity to the barycentric variance functional.
\end{remark}

The first theorem is a strong-convexity estimate for a single good target measure.
\begin{theorem}\label{main}
Let $\rho\in\mathcal{P}(\Omega)$ be a good measure with parameters $m,M,\eta,\Per$. Then for any $\mu_0,\mu_1\in\mathcal{P}(\Omega)$ and any Kantorovich potential $\psi_0$ on the $\mu_0$-side, i.e. any optimal pair $(\psi_0^c,\psi_0)$ for $(\rho,\mu_0)$, one has
\begin{equation}\label{eq:main_modulus}
W_2^2(\mu_1,\rho)-W_2^2(\mu_0,\rho)-\int_\Omega \psi_0\,\d(\mu_1-\mu_0)
\geq \mathcal{D}\bigl(W_2(\mu_0,\mu_1)\bigr),
\end{equation}
where, with $D_W:=\diam(\mathcal{P}(\Omega),W_2)$,
\[
\mathcal{D}(t):=\frac{A_1t^{12}}{A_2+A_3\left|\log(t/D_W)\right|}\quad(0<t\leq D_W),\qquad \mathcal{D}(0):=0.
\]
The constants $A_1,A_2,A_3>0$ depend only on $k,n,m,M,\eta,\Per$ and $\diam(\Omega)$.
\end{theorem}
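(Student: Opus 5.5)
We follow the dual strategy of Carlier--Delalande--M\'erigot, replacing Euclidean tools by heat-kernel regularization on $X$. Write $\phi_i:=\psi_i^c$ for Kantorovich potentials of $(\rho,\mu_i)$, $i=0,1$. Since $(\psi_0^c,\psi_0)$ is admissible but not optimal for $(\rho,\mu_1)$, we have
\[
W_2^2(\mu_1,\rho)-W_2^2(\mu_0,\rho)-\int\psi_0\,\d(\mu_1-\mu_0)=\Bigl[\int\phi_1\d\rho+\int\psi_1\d\mu_1\Bigr]-\Bigl[\int\psi_0^c\d\rho+\int\psi_0\d\mu_1\Bigr].
\]
So the left-hand side of \eqref{eq:main_modulus} is the gap of the dual functional $\mathcal K_{\mu_1}(\psi):=\int\psi^c\,\d\rho+\int\psi\,\d\mu_1$ between its maximizer $\psi_1$ and $\psi_0$. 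The plan therefore has three steps. First, bound this dual gap from below by a variance-type quantity $\|\phi_1-\phi_0\|_{L^2(\rho)}^2$, or by an $L^1$-type quantity, of the $\rho$-side potentials. Second, bound that quantity from below by $W_2(\mu_0,\mu_1)$. Third, collect the losses into the rate $t^{12}/|\log t|$.

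\emph{Step 1 (dual strong convexity).} The $c$-concave functions $\phi_i$ are uniformly Lipschitz, with constant $\lesssim\diam(\Omega)$, and uniformly $\lambda$-semiconcave, with $\lambda$ depending on $k,\diam\Omega$; this is standard for $\frac12\dist^2$ on Alexandrov spaces. Along the interpolation $\phi_t=(1-t)\phi_0+t\phi_1$ we use $\psi_t:=\phi_t^c$ together with the two-sided bounds $m\mathcal H^n\llcorner S_\rho\le\rho\le M\mathcal H^n\llcorner S_\rho$. The aim is the Alexandrov analogue of the Euclidean estimate
\[
\text{gap}\ \gtrsim\ \Var_\rho(\phi_1-\phi_0).
\]
In Euclidean space this comes from a Brunn--Minkowski/Pr\'ekopa-type concavity of $\phi\mapsto\int\phi^c\,\d\mu$. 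Here we instead argue through the first variation: $\frac{\d}{\d t}\int\psi_t\,\d\mu_1$ is computed by the transport maps $T_t$ induced by $\phi_t$. The John-domain condition with parameter $\eta$ and the perimeter bound $\Per$ then give a Poincar\'e inequality on $S_\rho$, with constant depending on $n,k,\eta,\Per,\diam\Omega$. This inequality controls $\Var_\rho$ by $\|\nabla(\phi_1-\phi_0)\|_{L^2(\rho)}^2$, and the latter appears in the second variation.

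\emph{Step 2 (from potentials to measures, the main obstacle).} We need $W_2(\mu_0,\mu_1)$ to be controlled by a power of $\|\phi_1-\phi_0\|$. The maps are $T_i(x)=\exp_x(-\nabla\phi_i)$, realized on Alexandrov spaces via gradient curves and the gradient exponential map. This gives
\[
W_2(\mu_0,\mu_1)\le\|\dist(T_0,T_1)\|_{L^2(\rho)}\lesssim\|\nabla\phi_0-\nabla\phi_1\|_{L^2(\rho)}^{\beta},
\]
which is the comparison of gradients of semiconcave functions. Since there are no second derivatives, we regularize by the heat flow: $P_\varepsilon\phi_i$ with the Bakry--\'Emery/Gigli--Kuwada--Ohta gradient estimates. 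The terms are split as
\[
\nabla\phi_0-\nabla\phi_1=(\nabla\phi_0-\nabla P_\varepsilon\phi_0)+\nabla P_\varepsilon(\phi_0-\phi_1)+(\nabla P_\varepsilon\phi_1-\nabla\phi_1).
\]
The outer terms are $O(\varepsilon^{\gamma})$ by semiconcavity, with a Laplacian bound of measure type whose total mass is controlled through $\Per(S_\rho)$. The middle term is $\lesssim\varepsilon^{-1/2}\|\phi_0-\phi_1\|_{L^1}$ or $L^2$ by heat-kernel gradient bounds. Finally we optimize in $\varepsilon$.

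\emph{Step 3 (bookkeeping).} Each exponent in Steps 1--2 is explicit: one square root from the variance, an $L^1$-to-$L^2$ interpolation using the Lipschitz bounds, the $\varepsilon$-optimization, and a boundary-layer loss near $\partial S_\rho$. Composing them should give $\text{gap}\gtrsim W_2^{12}$. The logarithmic factor arises from the small-time heat-kernel behaviour near the boundary of $S_\rho$, through a $\log(1/\varepsilon)$ term. With $\varepsilon$ a power of $t/D_W$, this produces $A_2+A_3|\log(t/D_W)|$ in the denominator. All constants depend only on $k,n,m,M,\eta,\Per,\diam\Omega$. I expect Step 2, specifically the gradient-comparison estimate without smooth structure near singular points and $\partial S_\rho$, to be the hardest part.
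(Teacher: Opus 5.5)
\textbf{The gap is in Step~1.} Step~1 carries the whole quantitative content of the theorem. Your Step~2 is in substance the imported Theorem~\ref{thm:quoted-potential-stability} plus Lemma~\ref{lem:alex-exp-comparison}; the paper uses these as a black box to get $\|\phi_1-\phi_0\|_{L^1(\rho)}\gtrsim W_2^6(\mu_0,\mu_1)$, so that part is not the issue.

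In Step~1 you interpolate on the $\rho$-side, $\phi_t=(1-t)\phi_0+t\phi_1$, and differentiate the dual functional $\int\phi_t\,\d\rho+\int\phi_t^c\,\d\mu_1$. The $\rho$-term is linear in $t$, so all curvature comes from $\int\phi_t^c\,\d\mu_1$. That term is integrated against the \emph{arbitrary} measure $\mu_1$, so the good-measure structure of $\rho$ never enters.
\begin{itemize}
\item Its first variation is $-\int(\phi_1-\phi_0)(S_t(y))\,\d\mu_1(y)$, where $S_t(y)$ minimizes $x\mapsto c(x,y)-\phi_t(x)$. These are maps out of $\mu_1$, not the maps $T_t$ pushing $\rho$ forward, and they need not be single-valued when $\mu_1$ is singular.
\item Formally, its second variation is $-\int\langle(\mathrm{Hess}\,h_{t,y})^{-1}\nabla(\phi_1-\phi_0),\nabla(\phi_1-\phi_0)\rangle(S_t(y))\,\d\mu_1(y)$, with $h_{t,y}=c(\cdot,y)-\phi_t$. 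Bounding this from below needs semiconvexity of $\phi_t$, which is unavailable.
\item For discrete $\mu_1$ it only sees $\nabla(\phi_1-\phi_0)$ at finitely many points, so $\|\nabla(\phi_1-\phi_0)\|_{L^2(\rho)}^2$ does not appear there.
\end{itemize}
A bound of that strength would, via Lemma~\ref{lem:alex-exp-comparison}, give $D_\rho(\mu_1,\mu_0)\gtrsim W_2^2(\mu_0,\mu_1)$ immediately. Your Poincar\'e step and Step~2 would then be pointless, which signals that the mechanism is off. The Euclidean Pr\'ekopa-type convexity you recall concerns $\psi\mapsto\int\psi^c\,\d\rho$, i.e.\ the functional integrated against the \emph{good} measure.

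\textbf{The missing idea.} Interpolate on the $\mu$-side, $\psi_s=\psi_0+sv$ with $v=\psi_1-\psi_0$, and work with the heat-regularized functional $\K_t[\psi_s]=\int_{S_\rho}\Phi_t[\psi_s]\,\d\rho$. The argument then runs as follows.
\begin{enumerate}
\item Taylor's formula around $s=1$ gives $D_\rho(\mu_1,\mu_0)=\lim_{t\to0}\int_0^1(-s)\,\partial_s^2\K_t[\psi_s]\,\d s$, with $-\partial_s^2\K_t[\psi_s]=\frac1t\int_{S_\rho}\Var_{\mu_x^t[\psi_s]}(v)\,\d\rho(x)$.
\item The John, perimeter and density hypotheses enter through a Poincar\'e-type inequality for the function $x\mapsto\E_{\mu_x^t[\psi_s]}(v)$ (third item of Lemma~\ref{l3.2}), not for $\phi_1-\phi_0$.
\item Letting $t\to0$ yields $D_\rho(\mu_1,\mu_0)\ge\kappa^{-2}\int_0^1 s\,g(s)^2\,\d s$, with $g(s)=\|v\circ T_s-\E_\rho(v\circ T_s)\|_{L^1(\rho)}$.
\item The envelope identity $\partial_s\phi_s=-v\circ T_s$ (Lemma~\ref{3.7}) shows $\int_0^1 g\ge\|\phi_1-\phi_0\|_{L^1(\rho)}\gtrsim W_2^6$.
\end{enumerate}
The logarithm comes from the weight $s$: $\int_\delta^1 g\le|\log\delta|^{1/2}\left(\int_\delta^1 s\,g^2\right)^{1/2}$ with $\delta\sim W_2^6$. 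It does not come from boundary heat-kernel asymptotics. The exponent $12=2\cdot 6$ comes from squaring the $L^1$ oscillation.

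\textbf{Step~3.} Your Step~3 does not derive anything. Taken literally, your Steps~1--2 would give $D_\rho\gtrsim W_2^6$ with no logarithm, so neither the exponent $12$ nor the logarithm is traced to any actual step.
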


The second theorem gives quantitative stability of Wasserstein barycenters. By~\cite[Theorem~4.1]{zbMATH06797407}, every $\mathbb{P}$ satisfying Assumption~\ref{assump} has a unique Wasserstein barycenter $\mu_\mathbb{P}$. A general $\mathbb{Q}\in\mathcal{P}(\mathcal{P}(\Omega))$ need not have a unique barycenter.
\begin{theorem}\label{barycenter}
Let $\sigma>0$, and let $\mathbb{P}\in\mathcal{P}(\mathcal{P}(\Omega))$ satisfy Assumption~\ref{assump}. Then there exists a constant $C>0$, depending only on $\sigma$, $k,n,\alpha_\mathbb{P},m_\mathbb{P},M_\mathbb{P},\eta_\mathbb{P},\Per_\mathbb{P}$ and $\diam(\Omega)$, such that for every $\mathbb{Q}\in\mathcal{P}(\mathcal{P}(\Omega))$ and every barycenter $\mu_\mathbb{Q}$ of $\mathbb{Q}$,
\begin{equation}\label{eq:stability_estimate}
C\,W_2^{12+\sigma}(\mu_\mathbb{P},\mu_\mathbb{Q})\leq W_1(\mathbb{P},\mathbb{Q}).
\end{equation}
\end{theorem}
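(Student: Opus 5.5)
We derive Theorem~\ref{barycenter} from Theorem~\ref{main} by the variational argument of Carlier--Delalande--M\'erigot: first convert single-target strong convexity into strong convexity of the variance at its minimizer, then compare variances under $\mathbb{P}$ and $\mathbb{Q}$.

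\textbf{Step 1: first-order optimality for $\mu_\mathbb{P}$.} For each $\rho$, choose a Kantorovich potential $\psi_\rho$ on the $\mu_\mathbb{P}$-side for the pair $(\rho,\mu_\mathbb{P})$, normalized (say $\psi_\rho(x_0)=0$ at a fixed point) and selected measurably in $\rho$. Such potentials are $c$-concave, hence uniformly Lipschitz with constant $\lesssim\diam(\Omega)$ and uniformly bounded. Set $\bar\psi:=\int\psi_\rho\,\d\mathbb{P}(\rho)$. Minimality of $\mu_\mathbb{P}$, tested along $\mu_\varepsilon=(1-\varepsilon)\mu_\mathbb{P}+\varepsilon\nu$, together with the duality formula \eqref{KD}, gives
\[
\int\bar\psi\,\d(\nu-\mu_\mathbb{P})\ge 0\qquad\text{for all }\nu\in\mathcal{P}(\Omega).
\]
The derivative of $\varepsilon\mapsto W_2^2(\mu_\varepsilon,\rho)$ at $0^+$ is $\min_\psi\int\psi\,\d(\nu-\mu_\mathbb{P})$ over optimal potentials $\psi$. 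This is the one technical point: we need a single measurable selection to realize the inequality. We plan to handle it by first proving the claim for finitely supported $\mathbb{P}$ and then passing to the limit, or by using uniqueness of potentials on $\operatorname{supp}\rho$ when $\rho$ is good. The one-sided inequality above is in fact all we need. Moreover, for every $\rho$ (good or not) we have the trivial bound $W_2^2(\nu,\rho)-W_2^2(\mu_\mathbb{P},\rho)-\int\psi_\rho\,\d(\nu-\mu_\mathbb{P})\ge0$, since $(\psi_\rho^c,\psi_\rho)$ is admissible for $(\rho,\nu)$. Integrating against $\mathbb{P}$, and applying Theorem~\ref{main} only on $S_\mathbb{P}$ with the uniform constants, we get
\[
\Var_\mathbb{P}(\nu)-\Var_\mathbb{P}(\mu_\mathbb{P})\ge\alpha_\mathbb{P}\,\mathcal{D}\bigl(W_2(\mu_\mathbb{P},\nu)\bigr).
\]

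\textbf{Step 2: comparison of variances.} For any $\mu$, the function $\rho\mapsto W_2^2(\mu,\rho)$ (with our factor-$\tfrac12$ normalization) is Lipschitz in $\rho$ with respect to $W_2$, with constant $\le D_W$. Hence $|\Var_\mathbb{P}(\mu)-\Var_\mathbb{Q}(\mu)|\le D_W\,W_1(\mathbb{P},\mathbb{Q})$ by Kantorovich--Rubinstein duality. Taking $\nu=\mu_\mathbb{Q}$ and using $\Var_\mathbb{Q}(\mu_\mathbb{Q})\le\Var_\mathbb{Q}(\mu_\mathbb{P})$,
\[
\alpha_\mathbb{P}\,\mathcal{D}\bigl(W_2(\mu_\mathbb{P},\mu_\mathbb{Q})\bigr)\le \Var_\mathbb{P}(\mu_\mathbb{Q})-\Var_\mathbb{Q}(\mu_\mathbb{Q})+\Var_\mathbb{Q}(\mu_\mathbb{P})-\Var_\mathbb{P}(\mu_\mathbb{P})\le 2D_W\,W_1(\mathbb{P},\mathbb{Q}).
\]

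\textbf{Step 3: absorbing the logarithm.} Set $t=W_2(\mu_\mathbb{P},\mu_\mathbb{Q})\in(0,D_W]$. Since $s^\sigma|\log s|\le (e\sigma)^{-1}$ for $s\in(0,1]$,
\[
A_2+A_3|\log(t/D_W)|\le \bigl(A_2+A_3(e\sigma)^{-1}\bigr)(t/D_W)^{-\sigma},
\]
so that $\mathcal{D}(t)\ge c_\sigma t^{12+\sigma}$ with $c_\sigma$ depending only on $\sigma$, on $A_1,A_2,A_3$ and on $D_W\le\diam(\Omega)$. Combining this with Step 2 gives \eqref{eq:stability_estimate} with $C=\alpha_\mathbb{P}c_\sigma/(2D_W)$. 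The case $t=0$ is trivial.

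The main obstacle is Step 1: making rigorous the first-order condition $\int\bar\psi\,\d(\nu-\mu_\mathbb{P})\ge0$. This requires a measurable choice of potentials and a justified interchange of the derivative with the $\mathbb{P}$-integral (via dominated convergence, using the uniform Lipschitz bounds). The convexity input itself is entirely supplied by Theorem~\ref{main}.
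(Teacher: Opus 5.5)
Your architecture is the paper's: integrate Theorem~\ref{main} on $S_\mathbb{P}$ and Lemma~\ref{3.3} off it, remove the linear term by a first-order property of $\mu_\mathbb{P}$, compare $\Var_\mathbb{P}$ with $\Var_\mathbb{Q}$, and absorb the logarithm. Steps~2--3 are fine up to a harmless constant, since $\rho\mapsto W_2^2(\mu,\rho)$ is $2D_W$-Lipschitz rather than $D_W$-Lipschitz.

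The gap is in Step~1, exactly where the paper uses Proposition~\ref{balance}. Your derivative formula is reversed. The function $f_\rho(\varepsilon):=W_2^2(\mu_\varepsilon,\rho)$ is a supremum of affine functions of $\varepsilon$, so $f_\rho'(0^+)$ is the \emph{maximum}, not the minimum, of $\int\psi\,\d(\nu-\mu_\mathbb{P})$ over optimal potentials for $(\rho,\mu_\mathbb{P})$. Minimality of $\mu_\mathbb{P}$ therefore only gives $\int f_\rho'(0^+)\,\d\mathbb{P}(\rho)\geq0$. This says nothing about a selection fixed in advance, and your claim is false in general.

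Take $\mathbb{P}=\delta_\rho$ with $\rho$ good and $\operatorname{spt}\rho\neq\Omega$, so $\mu_\mathbb{P}=\rho$. Set $\phi:=\frac12\dist^2(\cdot,\operatorname{spt}\rho)$ and $\psi:=\phi^c$. Then $\psi=0$ on $\operatorname{spt}\rho$, $\psi\leq-\phi<0$ off it, and $\psi^c=0$ on $\operatorname{spt}\rho$. So $(\psi^c,\psi)$ is an optimal $c$-concave pair, with the same $\rho$-side potential as the trivial choice $\psi\equiv0$. Yet $\int\psi\,\d(\nu-\mu_\mathbb{P})<0$ for every $\nu$ concentrated off $\operatorname{spt}\rho$.

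This $\mathbb{P}$ is finitely supported, so your remedy (a) cannot help. Remedy (b) fails too: the non-uniqueness sits entirely in $\psi$ off $\operatorname{spt}\mu_\mathbb{P}$, where $\mu_\mathbb{Q}$ may put mass, so uniqueness on the $\rho$-side does not pin it down. Moreover, targets outside $S_\mathbb{P}$ have no uniqueness at all.

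The missing ingredient is a correct choice of potentials. The paper uses the balanced family of Proposition~\ref{balance}, with $\int\psi_\rho(y)\,\d\mathbb{P}(\rho)=0$ for \emph{every} $y\in\Omega$, proved by convex duality and a Koml\'os compactness argument. Then $\bar\psi\equiv0$ for all $\nu$ simultaneously.

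Your route can be repaired more cheaply. You only need $\nu=\mu_\mathbb{Q}$, and Theorem~\ref{main} and Lemma~\ref{3.3} hold for \emph{every} potential. So fix $\nu$ first and, for each $\rho$, use a potential attaining the maximum that defines $f_\rho'(0^+)$; it exists by compactness of normalized optimal potentials. The difference quotients of the convex $f_\rho$ decrease to $f_\rho'(0^+)$ and are uniformly bounded. Dominated convergence then turns $\Var_\mathbb{P}(\mu_\varepsilon)\geq\Var_\mathbb{P}(\mu_\mathbb{P})$ into $\int f_\rho'(0^+)\,\d\mathbb{P}(\rho)\geq0$, which is precisely the inequality Step~1 needs.

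An alternative avoids any selection. Apply Theorem~\ref{main} at $\mu_0=\mu_\varepsilon$ towards $\nu$ and towards $\mu_\mathbb{P}$, and take the $(\varepsilon,1-\varepsilon)$-combination; the linear terms cancel pointwise in $\rho$. After using $\Var_\mathbb{P}(\mu_\varepsilon)\geq\Var_\mathbb{P}(\mu_\mathbb{P})$, dividing by $\varepsilon$ and letting $\varepsilon\downarrow0$ gives $\Var_\mathbb{P}(\nu)-\Var_\mathbb{P}(\mu_\mathbb{P})\geq\alpha_\mathbb{P}\mathcal{D}(W_2(\mu_\mathbb{P},\nu))$.
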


The power $12$ comes from two effects: the heat-regularized lower bound is quadratic in a centered $L^1$ oscillation, while the Alexandrov stability estimate for optimal maps converts the $L^1$ distance of dual potentials into the sixth power of $W_2(\mu_0,\mu_1)$. The logarithmic loss in Theorem~\ref{main} is harmless for Theorem~\ref{barycenter}, since the modulus $\mathcal{D}$ dominates $t^{12+\sigma}$ on bounded intervals for every fixed $\sigma>0$.

\subsection{Statistical consequences and comparison with related work}\label{subsec:stats-overview}

Section~\ref{sec:statistical-applications} derives several consequences of Theorem~\ref{barycenter}. If $\mathbb{P}_N=N^{-1}\sum_{i=1}^N\delta_{\rho_i}$ is the empirical law of i.i.d. samples from $\mathbb{P}$, then every empirical barycenter $\mu_{\mathbb{P}_N}$ is a natural estimator of $\mu_\mathbb{P}$. Under a polynomial metric-entropy assumption on $K=\operatorname{spt}\mathbb{P}\subset(\mathcal{P}(\Omega),W_2)$, we obtain the high-probability sample-complexity bound
\[
N\gtrsim \varepsilon^{-(12+\sigma)(d_{\rm ent}+2)}
      +\varepsilon^{-2(12+\sigma)}\log\frac1\delta
\quad\Longrightarrow\quad
\mathsf P\bigl(W_2(\mu_{\mathbb{P}_N},\mu_\mathbb{P})\le \varepsilon\bigr)\ge 1-\delta.
\]
The polynomial covering-number assumption is a low-complexity condition on $K$; separately, for the full Wasserstein space we prove an exponential metric-entropy estimate over a compact Alexandrov domain. This yields the logarithmic expected rate
\[
\mathbb E W_2(\mu_{\mathbb P_N},\mu_{\mathbb P})
\lesssim
\left(\frac{\log\log N}{\log N}\right)^{1/(n(12+\sigma))}.
\]

The empirical theory of barycenters in general metric spaces has been developed from several complementary viewpoints. Le Gouic and Loubes~\cite{LeGouicLoubes2017} proved existence and consistency in broad metric settings. Ahidar-Coutrix, Le Gouic and Paris~\cite{AhidarLeGouicParis2020} obtained rates under a variance inequality, verifying it in non-positively curved Alexandrov spaces and in certain non-negatively curved spaces with extendable geodesics. Le Gouic, Paris, Rigollet and Stromme~\cite{LeGouicParisRigolletStromme2023} proved fast, including parametric, rates under bi-extendibility or hugging assumptions. The present paper addresses a different deterministic stability problem: it estimates the variation of the barycenter map under perturbations of the second-order law. Our ambient space is an {\bf arbitrary compact domain} in an Alexandrov space with curvature bounded from below; we do not impose non-positive curvature or geodesic extendibility. The price is the uniform good-measure assumption, which is precisely what makes the heat-kernel dual-convexity argument quantitative.

\paragraph{Open problems.}
The results suggest several directions for further work. A first problem is to extend the quantitative stability theory from Alexandrov spaces to more general metric measure spaces satisfying the $\mathrm{RCD}(K,N)$ condition. A second problem is to optimize the dependence of the stability exponent and the constants on the geometric dimension. Dimension-free estimates, if available in suitable infinite-dimensional settings such as Wiener space or configuration spaces (cf.~\cite{arXiv:2412.01190}), would be particularly relevant for high-dimensional probability and statistics. Finally, in the Euclidean setting Carlier--Delalande--M\'{e}rigot~\cite[Section~1.5]{zbMATH07819904} obtain a stronger linear stability mechanism leading to
\[
W_2(\mu_{\mathbb{P}_N},\mu_\mathbb{P})\lesssim W_1^{1/2}(\mathbb{P}_N,\mathbb{P}).
\]
Even in the entropy-favorable case $d_{\rm ent}=0$, our exponent gives the slower rate $N^{-1/(24+2\sigma)}$ up to logarithmic factors. Understanding whether the exponent $12$ is intrinsic or an artifact of the heat-regularization argument remains an important question.

\paragraph{Organization.}
Section~\ref{sec:main-proof} proves Theorems~\ref{main} and~\ref{barycenter}. The proof consists of heat-kernel regularization, a lower bound through regularized Kantorovich functionals, a limit analysis, and an Alexandrov stability estimate for optimal maps. Section~\ref{sec:statistical-applications} contains the statistical applications. The proof of the global zero-order balance condition is deferred to Appendix~\ref{A}.

\section{Quantitative Stability Theorems}\label{sec:main-proof}

\subsection{Heat kernel regularization}\label{sub}
We briefly recall the heat-kernel regularization framework developed in \cite[Section 2]{arXiv:2602.19175}. 

Throughout this subsection, a Kantorovich potential originally defined on $\Omega$ is extended to $X$ by a fixed McShane extension, and, when necessary, by the harmless truncation used in \cite[Section~2]{arXiv:2602.19175} to meet the decay hypotheses. The extension preserves the Lipschitz constant. All estimates below depend only on that Lipschitz constant and on the structural parameters, and the limiting quantities in the arguments are independent of the chosen extension; see \cite[Lemmas~2.12--2.13]{arXiv:2602.19175}.

Let $p_t(x,y)$, $t>0$, be the heat kernel, and let $\psi\in\Lip(X,\dist)$ have the decay required in \cite[Section~2]{arXiv:2602.19175}. We define the heat-kernel-regularized $c$-transform by
\begin{equation*}
\Phi_t[\psi](x):=-t \log \int_X e^{\frac{{\psi}(y)}{t}} p_{\frac{t}{2}}(x, y)\, \d\mm(y).
\end{equation*}
Given a good measure $\rho$, we define the heat-kernel-regularized Kantorovich functional 
\begin{equation*}
\K_t[\psi]:=\int_{S_\rho} \Phi_t[\psi]\,\d\rho.
\end{equation*}

For $x\in X$, we define  
\begin{equation*}
\d\mu_x^t[\psi](y):=\frac{e^{\psi(y)/t}p_{t/2}(x,y)\,\d\mm(y)}{\int_X e^{\psi(y)/t}p_{t/2}(x,y)\,\d\mm(y)},\qquad
\mu^t[\psi]:=\int_{S_\rho}\mu_x^t[\psi]\,\d\rho(x).
\end{equation*}

\begin{lemma}[{\cite[Lemma 2.3, Proposition 2.10]{arXiv:2602.19175}}]\label{l3.2}
Let $\rho$ be a good measure with parameters $m,M,\eta,\Per$. Let $\psi_0,\psi_1\in\Lip(X,\dist)$ be functions with the required fast decay at infinity. Denote $v=\psi_1-\psi_0$ and $\psi_s=\psi_0+s v$, $s\in [0,1]$. For $t$ small enough, it holds 
\begin{itemize}
\item $\frac{\d}{\d s}{\K}_t[\psi_s]=-  \E_{\mu^t[\psi_s]}(v)$;
\item $\frac{\d^2}{\d s^2}{\K}_t[\psi_s]=-\frac{1}{t}\int_{S_\rho} {\rm{Var}}_{ \mu_x^t[\psi_s]}(v)\,\d\rho(x)$;
\item there is $\kappa=\kappa(k,n,\eta,m,M,\Per,\diam(\Omega))$ such that
\begin{equation*}
\int_{S_\rho}\big| \E_{\mu_x^t[\psi_s]}(v)-\E_{\mu^t[\psi_s]}(v)\big|\,\d\rho(x)\leq  \frac{\kappa}{\sqrt{t}}\left(\int_{S_\rho} {\rm{Var}}_{ \mu_x^t[\psi_s]}(v)\,\d\rho(x)\right)^\frac{1}{2}.
\end{equation*}
\end{itemize}
\end{lemma}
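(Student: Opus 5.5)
The first two items are obtained by differentiating under the integral sign. The third is a weighted $L^1$-Poincaré inequality on $S_\rho$ applied to the map $x\mapsto \E_{\mu_x^t[\psi_s]}(v)$, together with a bound of order $t^{-1/2}$ on its metric gradient.

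For the derivative formulas, write $Z_s(x):=\int_X e^{\psi_s(y)/t}p_{t/2}(x,y)\,\d\mm(y)$, so that $\Phi_t[\psi_s]=-t\log Z_s$. The Gaussian upper bounds for $p_{t/2}$ on Alexandrov spaces, the Lipschitz bound on $\psi_0,\psi_1$ and their decay at infinity provide an integrable majorant for $e^{\psi_s/t}p_{t/2}(x,\cdot)(1+|v|+v^2)$, uniformly in $s\in[0,1]$ and $x\in\Omega$. Hence we may differentiate under the integral.

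At the level of a single point $x$ we then have:
\[
\frac{\d}{\d s}\Phi_t[\psi_s](x)=-t\,\frac{Z_s'(x)}{Z_s(x)}=-\E_{\mu_x^t[\psi_s]}(v),
\qquad
\frac{\d^2}{\d s^2}\Phi_t[\psi_s](x)=-\frac1t\Var_{\mu_x^t[\psi_s]}(v).
\]
The second identity follows from $\frac{\d}{\d s}\mu_x^t[\psi_s]=\frac1t\bigl(v-\E_{\mu_x^t[\psi_s]}(v)\bigr)\mu_x^t[\psi_s]$. Integrating against $\rho$ over $S_\rho$ and using Fubini with the definition of $\mu^t[\psi_s]$ gives the first two items.

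For the third item, set $f(x):=\E_{\mu_x^t[\psi_s]}(v)$. Since $\int_{S_\rho} f\,\d\rho=\E_{\mu^t[\psi_s]}(v)$, the left-hand side is the centered $L^1(\rho)$-oscillation of $f$. The argument has three steps.

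\begin{enumerate}[{\rm (a)}]
\item \emph{Poincaré inequality.} On the John domain $S_\rho$, whose perimeter is bounded by $\Per$, in the doubling space $(X,\dist,\mathcal{H}^n)$, an $L^1$-Poincaré inequality holds. Combined with $m\mathcal{H}^n\le\rho\le M\mathcal{H}^n$ on $S_\rho$, it gives
\[
\int_{S_\rho}|f-\textstyle\int f\,\d\rho|\,\d\rho\le C(k,n,\eta,m,M,\Per,\diam\Omega)\int_{S_\rho}|\nabla f|\,\d\mathcal{H}^n .
\]
I would reach the constant via the BV coarea/isoperimetric route on John domains, which is where $\Per$ and $\eta$ enter.

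\item \emph{Gradient of $f$.} Differentiating in $x$, which on an Alexandrov space is to be read via local Lipschitz constants, gives the covariance identity
\[
\nabla f(x)=\mathrm{Cov}_{\mu_x^t[\psi_s]}\bigl(v,\nabla_x\log p_{t/2}(x,\cdot)\bigr).
\]
Cauchy--Schwarz then yields $|\nabla f|\le \Var_{\mu_x^t}(v)^{1/2}\,\Var_{\mu_x^t}(\nabla_x\log p_{t/2}(x,\cdot))^{1/2}$.

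\item \emph{Conclusion.} Once $\Var_{\mu_x^t}(\nabla_x\log p_{t/2}(x,\cdot))\lesssim 1/t$ is established, a final Cauchy--Schwarz over $S_\rho$, using $\rho\ge m\mathcal{H}^n$, gives the claim with $\kappa/\sqrt t$.
\end{enumerate}

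The main obstacle is this score-variance bound. The crude Li--Yau-type estimate $|\nabla_x\log p_{t/2}(x,y)|\lesssim \dist(x,y)/t+t^{-1/2}$ alone gives only order $1/t$, because the tilt $e^{\psi_s/t}$ moves the mass of $\mu_x^t$ to distance $O(1)$ from $x$. The point is that only the fluctuation matters, not the size of the score itself.

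I would first show, by a Laplace-type argument, that $\mu_x^t[\psi_s]$ concentrates in a window of width $O(\sqrt t)$ around the minimizers of $y\mapsto \frac12\dist^2(x,y)-\psi_s(y)$. This uses the semiconcavity of $\dist^2(x,\cdot)$ coming from the lower curvature bound, and the Gaussian two-sided heat kernel bounds. On that window, the comparison $t\log p_{t/2}(x,y)\approx-\dist^2(x,y)$ (Varadhan asymptotics with uniform Li--Yau/Hamilton gradient control) makes the score fluctuate by $O(\sqrt t\,\diam\Omega/t)=O(t^{-1/2})$. The Gaussian tails control the rest of the mass.

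Making this uniform in $x\in S_\rho$ and $s\in[0,1]$ for all sufficiently small $t$ is the delicate part.
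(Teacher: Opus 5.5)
Your derivation of the first two items is correct: you differentiate under the integral, use $\frac{\d}{\d s}\mu_x^t[\psi_s]=\frac1t\bigl(v-\E_{\mu_x^t[\psi_s]}(v)\bigr)\mu_x^t[\psi_s]$, and integrate against $\rho$. The gap is in the third item, at the step you single out yourself. The bound $\Var_{\mu_x^t[\psi_s]}(\nabla_x\log p_{t/2}(x,\cdot))\lesssim 1/t$ \emph{pointwise in} $x\in S_\rho$ is false, so no Laplace-type argument can prove it.

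Already in $X=\mathbb{R}^n$ one has $\nabla_x\log p_{t/2}(x,y)=(y-x)/t$, so the score variance is $t^{-2}$ times the spatial variance of $\mu_x^t[\psi_s]$. Since $\psi_0,\psi_1$ are arbitrary Lipschitz functions (in the application, potentials of arbitrary, e.g.\ discrete, measures), $y\mapsto c(x,y)-\psi_s(y)$ typically has minimizers at mutual distance of order one. This happens for $x$ on a hypersurface crossing $S_\rho$: the singular set of $\psi_s^c$, e.g.\ the interface of two transport cells. In a strip of width comparable to $t$ around it, $\mu_x^t[\psi_s]$ splits its mass between these far-apart minimizers, so the score variance is of order $t^{-2}$. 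At the same time $f=\E_{\mu_x^t[\psi_s]}(v)$ passes from one value of $v$ to another across the strip, so $|\nabla f|\sim t^{-1}$ while $\Var_{\mu_x^t}(v)\sim 1$. Thus not even $|\nabla f|\le Ct^{-1/2}\Var_{\mu_x^t}(v)^{1/2}$ holds pointwise. Your heuristic localizes $\mu_x^t$ near ``the minimizers'' but ignores that they can be $O(1)$ apart. Also, for merely Lipschitz $\psi_s$ there is no quadratic growth to produce $O(\sqrt t)$ windows. This is a false claim, not a uniformity issue.

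What holds, and what your closing Cauchy--Schwarz over $S_\rho$ actually needs, is the \emph{integrated} bound $\int_{S_\rho}\Var_{\mu_x^t[\psi_s]}(\nabla_x\log p_{t/2}(x,\cdot))\,\d\mathcal{H}^n(x)\le C/t$. In the example the bad strip has measure $O(t)$, which is why this survives. It comes from the following identity, obtained from $\Phi_t[\psi_s]=-t\log Z_s$ by the same computation as your covariance formula:
\[
t\,\Var_{\mu_x^t[\psi_s]}\bigl(\nabla_x\log p_{t/2}(x,\cdot)\bigr)
=-\Delta\Phi_t[\psi_s](x)-t\,\E_{\mu_x^t[\psi_s]}\bigl(\Delta_x\log p_{t/2}(x,\cdot)\bigr).
\]
Integrate it over $S_\rho$ and bound the two terms separately:
\begin{itemize}
\item The Gauss--Green formula (or the BV definition of perimeter) bounds $-\int_{S_\rho}\Delta\Phi_t[\psi_s]\,\d\mathcal{H}^n$ by $\Lip(\Phi_t[\psi_s])\,\Per(S_\rho)$. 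The Lipschitz constant is uniform in $s$ and small $t$.
\item A Li--Yau-type lower bound on $t\,\Delta_x\log p_{t/2}$, together with $\mathcal{H}^n(S_\rho)\le 1/m$, controls the second term.
\end{itemize}
In $\mathbb{R}^n$ this is the identity $\frac1t\mathrm{Cov}_{\mu_x^t}(y)=I-D^2\Phi_t[\psi_s](x)$ integrated over $S_\rho$ by the divergence theorem.

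This is where $\Per$ enters $\kappa$. Your step (a) needs only the John parameter, since John domains support an $L^1$-Poincar\'e inequality without any perimeter assumption; your attribution of $\Per$ to that step was a sign the key ingredient was missing. With the pointwise score bound replaced by this integrated one, your steps (a), (b) and the final Cauchy--Schwarz go through.
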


\begin{theorem}[Stability of Kantorovich potentials; {\cite[Theorem~3.1]{arXiv:2602.19175}}]
\label{thm:quoted-potential-stability}
Let $(X,\dist)$ be an $n$-dimensional Alexandrov space with curvature bounded from below by $k$, let $\Omega\subset X$ be compact, and let $\rho$ be concentrated on a good John domain $S_\rho\subseteq\Omega$ with parameters $m,M,\eta,\Per$. Let $(\phi_i,\psi_i)$, $i=0,1$, be Kantorovich pairs for $(\rho,\mu_i)$, with $\phi_i=\psi_i^c$, and assume the normalization $\E_\rho(\phi_i)=0$. Then the gradients $\nabla\phi_i$ are defined $\rho$-a.e. on $S_\rho$, and there exists a constant $C_{\rm pot}>0$, depending only on $k,n,m,M,\eta,\Per$ and $\diam(\Omega)$, such that
\[
\int_{S_\rho}|\nabla\phi_1-\nabla\phi_0|^2\,\d\rho
\leq C_{\rm pot}
\left(\int_{S_\rho}|\phi_1-\phi_0|^2\,\d\rho\right)^{1/3}.
\]
The estimate is invariant under adding constants to the potentials once the above normalization is imposed.
\end{theorem}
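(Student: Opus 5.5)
The plan is an interpolation argument. The energy $\int|\nabla(\phi_1-\phi_0)|^2$ will be controlled by pairing one copy of $\nabla u$, $u:=\phi_1-\phi_0$, against a heat-smoothed $u$, and the other copy against the measure-valued Laplacian, which semiconcavity keeps finite. First, because $\rho$ has density between $m\mathcal H^n\llcorner S_\rho$ and $M\mathcal H^n\llcorner S_\rho$, all integrals may be taken against $\mathcal H^n\llcorner S_\rho$, at the cost of factors $m^{-1},M$. Second, each $\phi_i=\psi_i^c$ is an infimum of functions $y\mapsto \tfrac12\dist^2(\cdot,y)-\psi_i(y)$ with $y\in\Omega$. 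On an Alexandrov space with curvature $\ge k$ these are uniformly $\lambda$-semiconcave on $\Omega$ and uniformly Lipschitz, with $\lambda,L$ depending only on $k$ and $\diam(\Omega)$. Hence $\nabla\phi_i$ exists $\mathcal H^n$-a.e. (Perelman, Otsu--Shioya), and by Petrunin's results the distributional Laplacian $\Delta\phi_i$ is a signed Radon measure whose absolutely continuous part satisfies $\Delta\phi_i\le n\lambda$. The normalization $\E_\rho(\phi_i)=0$ together with the Lipschitz bound gives $\|u\|_\infty\le 2L\diam(\Omega)$.

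Next I will bound the total variation of $\Delta\phi_i$ on $S_\rho$. Gauss--Green on the finite-perimeter set $S_\rho$ gives $\Delta\phi_i(S_\rho)=\int_{\partial^*S_\rho}\nabla\phi_i\cdot\nu$, which is at least $-L\Per$. Combined with the upper bound $\Delta\phi_i\le n\lambda$, this yields $|\Delta\phi_i|(S_\rho)\le 2n\lambda\,\mathcal H^n(\Omega)+L\Per$. Then I introduce a smoothing $P_s$, namely the Neumann heat semigroup on $S_\rho$, or alternatively the ambient heat flow as in \S\ref{sub}. The John condition with parameter $\eta$ is what makes it behave uniformly; the two properties needed are
\[
\|\nabla P_s u\|_{L^1(S_\rho)}\le C s^{-1/2}\|u\|_{L^1(S_\rho)},\qquad \|u-P_s u\|_{L^\infty(S_\rho)}\le C\sqrt{s}\,L .
\]
Both should follow from Gaussian heat-kernel bounds under the curvature lower bound (Bishop--Gromov) together with Poincaré inequalities on John domains. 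With these in hand I split
\[
\int_{S_\rho}|\nabla u|^2=\int_{S_\rho}\nabla u\cdot\nabla P_s u+\int_{S_\rho}\nabla u\cdot\nabla(u-P_su).
\]
The first term is at most $2L\,C s^{-1/2}\|u\|_{L^1}$. For the second term, integrating by parts gives $-\int(u-P_su)\,\d\Delta u$ plus a boundary term over $\partial^*S_\rho$. Both pieces are bounded by $C\sqrt s\,\bigl(|\Delta\phi_0|(S_\rho)+|\Delta\phi_1|(S_\rho)+L\Per\bigr)$.

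Optimizing in $s$ gives $\int|\nabla u|^2\lesssim \|u\|_{L^1}^{1/2}$, and Cauchy--Schwarz then bounds this by a constant times $\|u\|_{L^2}^{1/2}$. If some step loses more than expected, for instance if the $L^\infty$ bound on $u-P_su$ near $\partial S_\rho$ only holds with a worse power of $s$, the same balancing still yields a power, with exponent at least the claimed $1/3$. Since $\|u\|_{L^2}$ is bounded a priori, any exponent $\ge 1/3$ implies the stated bound after enlarging the constant. Invariance under adding constants is automatic once the normalization $\E_\rho(\phi_i)=0$ is fixed.

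The main obstacle is the integration-by-parts step on a non-smooth space and domain. One must justify Gauss--Green for semiconcave $\phi_i$, whose Laplacian is only a measure, on a finite-perimeter set in an Alexandrov space, including the singular part of $\Delta\phi_i$ and singular points of $X$. One must also obtain the uniform smoothing estimates for $P_s$ near the boundary of a John domain. My first attempt would avoid the Neumann semigroup altogether: use the ambient heat flow from \S\ref{sub} applied to the McShane extension, and confine all boundary contributions to a single perimeter term controlled by $L\Per$.
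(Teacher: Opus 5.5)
The paper does not prove this statement; it imports it from \cite[Theorem~3.1]{arXiv:2602.19175}. So I am judging your argument on its own terms. The architecture is sensible: uniform semiconcavity, a total-variation bound on $\Delta\phi_i$ from $\Delta\phi_i\le n\lambda$ plus Gauss--Green against $\Per$, heat smoothing, and balancing in $s$. However, the final exponent bookkeeping is wrong, and as written the argument proves a strictly weaker inequality. Your two estimates give $\int_{S_\rho}|\nabla u|^2\lesssim s^{-1/2}\|u\|_{L^1}+s^{1/2}$, hence
\[
\int_{S_\rho}|\nabla u|^2\lesssim\|u\|_{L^1}^{1/2}\le\|u\|_{L^2}^{1/2}=\Bigl(\int_{S_\rho}|u|^2\Bigr)^{1/4}.
\]
The statement asks for $(\int_{S_\rho}|u|^2)^{1/3}=\|u\|_{L^2}^{2/3}$. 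You compared the exponent $1/2$ on $\|u\|_{L^2}$ with $1/3$, but the target exponent on $\|u\|_{L^2}$ is $2/3$. In the relevant regime $\|u\|_{L^2}\to0$, $\|u\|_{L^2}^{1/2}\gg\|u\|_{L^2}^{2/3}$. The a priori bound $\|u\|_\infty\le 2L\diam(\Omega)$ does not rescue this: the conversion would require $\|u\|_{L^1}\lesssim\|u\|_{L^2}^{4/3}$, which fails whenever $|u|$ is roughly constant in modulus (then $\|u\|_{L^1}\approx\|u\|_{L^2}$). Your fallback remark is also backwards: further losses \emph{lower} the exponent, and a lower exponent gives a weaker bound, not a stronger one.

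The repair is to pair the first term in $L^2$ instead of $L^\infty$--$L^1$. For a symmetric Markov semigroup the spectral theorem gives $\|\nabla P_sf\|_{L^2}\le(2es)^{-1/2}\|f\|_{L^2}$, so
\[
\int\nabla u\cdot\nabla P_su\le\tfrac12\int|\nabla u|^2+\tfrac{1}{4es}\|u\|_{L^2}^2 .
\]
Absorbing the first term and keeping your $C\sqrt s$ bound for the second gives $\int|\nabla u|^2\le\frac{1}{2es}\|u\|_{L^2}^2+2C\sqrt s$. Choosing $s=\|u\|_{L^2}^{4/3}$ then produces exactly $(\int|u|^2)^{1/3}$. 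This is the Gagliardo--Nirenberg-type balance $\|\nabla u\|_{L^2}^3\lesssim\|u\|_{L^2}\,L\,(|\Delta u|(S_\rho)+L\Per)$ that the exponent $1/3$ encodes. It also removes the need for a curvature-dependent $L^1$ gradient bound on $P_s$.

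What then remains is the technical core you already flagged. First, Gauss--Green for the divergence-measure fields $\nabla\phi_i$ on the finite-perimeter set $S_\rho$ in an Alexandrov space. Second, localizing the smoothing to $S_\rho$:
\begin{itemize}
\item With the ambient flow, the spectral bound involves $\tilde u$ on all of $X$. The extension must therefore be localized, and its contribution on a $\sqrt s$-collar outside $S_\rho$ controlled by $\|u\|_{L^2(S_\rho)}$, up to errors that vanish fast enough as $s\to0$.
\item With the Neumann semigroup on $S_\rho$, the spectral bound is free, but the first-moment estimate $\|u-P_su\|_\infty\lesssim\sqrt s\,L$ near a John boundary is not readily available.
\end{itemize}

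Incidentally, your $L^1$ bound, if justified, would already imply the inequality $\int_{S_\rho}|f|\,\d\rho\ge c(\int_{S_\rho}|\nabla\phi_1-\nabla\phi_0|^2\,\d\rho)^3$ that Proposition~\ref{gs} actually uses, even with power $2$ in place of $3$. It is, however, not the stated theorem.
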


\subsection{Lower bound via regularized functionals}
Denote 
\[
D_\rho(\mu_1,\mu_0):=W_2^2(\mu_1, \rho)-W_2^2(\mu_0, \rho)-\int_\Omega \psi_0\,\d (\mu_1-\mu_0).
\]

\begin{lemma}\label{3.3}
Assume that $\psi_0$ is a Kantorovich potential from $\mu_0$ to $\rho$. Then $D_\rho(\mu_1,\mu_0)\geq 0$.
\end{lemma}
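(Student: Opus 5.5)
The inequality $D_\rho(\mu_1,\mu_0)\ge 0$ follows directly from the Kantorovich duality \eqref{KD}, by testing the supremum for $W_2^2(\mu_1,\rho)$ with the optimal pair for $(\rho,\mu_0)$. Let $(\phi_0,\psi_0)=(\psi_0^c,\psi_0)$ be an optimal pair for $(\rho,\mu_0)$. Then
\[
W_2^2(\mu_0,\rho)=\int_\Omega\phi_0\,\d\rho+\int_\Omega\psi_0\,\d\mu_0 .
\]
By definition of the $c$-transform, $\phi_0(x)+\psi_0(y)\le c(x,y)$ for all $x,y\in\Omega$. The pair $(\phi_0,\psi_0)$ is therefore admissible in the supremum \eqref{KD} defining $W_2^2(\mu_1,\rho)$. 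This gives
\[
W_2^2(\mu_1,\rho)\ge \int_\Omega\phi_0\,\d\rho+\int_\Omega\psi_0\,\d\mu_1 .
\]

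Subtracting the two displays, the $\rho$-terms cancel, leaving
\[
W_2^2(\mu_1,\rho)-W_2^2(\mu_0,\rho)\ge \int_\Omega\psi_0\,\d(\mu_1-\mu_0),
\]
which is exactly $D_\rho(\mu_1,\mu_0)\ge0$. Equivalently, $\psi_0$ is a supergradient-type element for the convex functional $\mu\mapsto W_2^2(\mu,\rho)$ at $\mu_0$.

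The only point needing care is integrability. On the compact set $\Omega$, a Kantorovich potential $\psi_0$ and its transform $\psi_0^c$ are bounded and Lipschitz, since $c$ is Lipschitz on $\Omega\times\Omega$. All integrals above are therefore finite for arbitrary probability measures on $\Omega$. The McShane extension used elsewhere plays no role here: only values on $\Omega$ enter.
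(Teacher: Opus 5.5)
Your proof is correct and is essentially the paper's argument: you write $W_2^2(\mu_0,\rho)=\int_\Omega\psi_0\,\d\mu_0+\int\psi_0^c\,\d\rho$, observe that the same pair $(\psi_0^c,\psi_0)$ is admissible in the dual problem for $(\mu_1,\rho)$, and the $\rho$-terms cancel. One small wording slip: the resulting inequality says $\psi_0$ is a \emph{sub}gradient of the convex map $\mu\mapsto W_2^2(\mu,\rho)$ at $\mu_0$, not a supergradient.
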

\begin{proof}
By Kantorovich duality, it holds
\begin{equation}
W_2^2(\mu_0, \rho)= \int_\Omega \psi_0\,\d\mu_0+\int_{S_\rho} \psi_0^c\,\d\rho.
\end{equation}
Then 
\begin{equation}
D_\rho(\mu_1,\mu_0)=W_2^2(\mu_1, \rho)-\left(\int_\Omega \psi_0\,\d\mu_1+\int_{S_\rho} \psi_0^c\,\d\rho\right)\geq 0.
\end{equation}
\end{proof}

\begin{lemma}\label{l3.4}
Assume that, for $i=0,1$, $\psi_i$ is a Kantorovich potential from $\mu_i$ to $\rho$. It holds that
\begin{equation*}
D_\rho(\mu_1,\mu_0)=\lim_{t\rightarrow0}\int_0^1 -s \frac{\d^2}{\d s^2}{\K}_t[\psi_s]\,\d s.
\end{equation*}
\end{lemma}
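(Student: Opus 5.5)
Integrating by parts in $s$ gives, for each fixed $t$,
\[
\int_0^1 -s\,\frac{\d^2}{\d s^2}\K_t[\psi_s]\,\d s
= -\frac{\d}{\d s}\K_t[\psi_s]\Big|_{s=1} + \K_t[\psi_1]-\K_t[\psi_0].
\]
By the first item of Lemma~\ref{l3.2}, the boundary term equals $\E_{\mu^t[\psi_1]}(v)=\int v\,\d\mu^t[\psi_1]$, with $v=\psi_1-\psi_0$. So the whole proof reduces to passing to the limit $t\to0$ in two quantities. The first is $\K_t[\psi_i]=\int_{S_\rho}\Phi_t[\psi_i]\,\d\rho$, which I expect to converge to $\int_{S_\rho}(-\psi_i^c)\,\d\rho$ up to the sign convention. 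The second is $\int v\,\d\mu^t[\psi_1]$, which should converge to $\int v\,\d\mu_1$.

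\textbf{Convergence of $\Phi_t$.} By Varadhan-type asymptotics for the heat kernel on Alexandrov spaces (Gaussian bounds plus $-t\log p_{t/2}(x,y)\to \dist^2(x,y)=2c(x,y)$ in the normalization with $p_{t/2}$), the Laplace principle gives
\[
\Phi_t[\psi](x)\to -\sup_y\{\psi(y)-c(x,y)\}=\inf_y\{c(x,y)-\psi(y)\}=\psi^c(x)
\]
locally uniformly for Lipschitz $\psi$ with the required decay. Hence $\K_t[\psi_i]\to\int\psi_i^c\,\d\rho$. I will quote this from \cite[Section~2]{arXiv:2602.19175}. I must check the sign: $\Phi_t=-t\log\int e^{\psi/t}p$ behaves like $-\max(\psi-c)=\psi^c$, which is consistent. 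Then $\K_t[\psi_1]-\K_t[\psi_0]\to\int\psi_1^c\,\d\rho-\int\psi_0^c\,\d\rho$.

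\textbf{Convergence of $\mu^t[\psi_1]$.} The measure $\mu^t_x[\psi_1]$ concentrates on $\operatorname{argmax}_y\{\psi_1(y)-c(x,y)\}$, which for $\rho$-a.e.\ $x$ is the single point $T_1(x)$, the optimal map from $\rho$ to $\mu_1$. This holds because $\rho\ll\mathcal H^n$ and optimal maps exist on Alexandrov spaces. Hence $\mu^t[\psi_1]\rightharpoonup (T_1)_\#\rho=\mu_1$ weakly. Since $v$ is continuous and bounded on the relevant compact set (after truncation), $\int v\,\d\mu^t[\psi_1]\to\int v\,\d\mu_1$.

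\textbf{Assembly.} Combining the two limits, the limit equals
\[
\int\psi_1\,\d\mu_1+\int\psi_1^c\,\d\rho-\int\psi_0\,\d\mu_1-\int\psi_0^c\,\d\rho.
\]
By duality, the first two terms give $W_2^2(\mu_1,\rho)$. For the last two terms, write $\int\psi_0^c\,\d\rho=W_2^2(\mu_0,\rho)-\int\psi_0\,\d\mu_0$. This yields exactly $W_2^2(\mu_1,\rho)-W_2^2(\mu_0,\rho)-\int\psi_0\,\d(\mu_1-\mu_0)=D_\rho(\mu_1,\mu_0)$.

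\textbf{Main obstacle.} The delicate step is the weak convergence $\mu^t[\psi_1]\to\mu_1$. It needs uniqueness of the maximizer $\rho$-a.e., together with quantitative concentration of the Gibbs measures $\mu_x^t$ from the two-sided Gaussian heat-kernel bounds. It also needs independence from the McShane extension, since the mass of $\mu_x^t$ outside $\Omega$ must vanish. My first attempt is a Laplace-principle argument. Fix $\varepsilon>0$ and consider the set where $\psi_1(y)-c(x,y)<\max-\varepsilon$. On this set, the heat-kernel bounds give weight at most $e^{-\varepsilon/(2t)}$ times polynomial factors, relative to a small ball around $T_1(x)$. Dominated convergence in $x$ then finishes the argument. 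Differentiation under the integral in $s$ is already covered by Lemma~\ref{l3.2}.
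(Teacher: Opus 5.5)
Your proposal is correct and follows the paper's proof essentially exactly. It uses the same integration-by-parts identity $\K_t[\psi_1]-\K_t[\psi_0]+\E_{\mu^t[\psi_1]}(v)=\int_0^1 -s\,\frac{\d^2}{\d s^2}\K_t[\psi_s]\,\d s$, the same two limits $\K_t[\psi_i]\to\int\psi_i^c\,\d\rho$ and $\E_{\mu^t[\psi_1]}(v)\to\int v\,\d\mu_1$ (the paper simply quotes these from \cite[Lemmas~2.12--2.13]{arXiv:2602.19175}, including the extension issue you flag), and the same Kantorovich-duality bookkeeping. One small slip: with this normalization Varadhan's asymptotic reads $-t\log p_{t/2}(x,y)\to c(x,y)=\frac12\dist^2(x,y)$, not $\dist^2(x,y)$, and this is what your (correct) conclusion $\Phi_t[\psi]\to\psi^c$ actually requires.
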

\begin{proof}
Note that 
\begin{equation}
\K_t[\psi_1]-\K_t[\psi_0]+ \E_{\mu^t[\psi_1]}(v)=\int_0^1 -s \frac{\d^2}{\d s^2}{\K}_t[\psi_s]\,\d s.
\end{equation}
By \cite[Lemma 2.12, Lemma 2.13]{arXiv:2602.19175}, letting $t\rightarrow0$, we have  
\begin{equation}\label{1.7}
\begin{aligned}
\lim_{t\rightarrow0}\int_0^1 -s \frac{\d^2}{\d s^2}{\K}_t[\psi_s]\,\d s
&=\lim_{t\rightarrow0}\bigl(\K_t[\psi_1]-\K_t[\psi_0]+ \E_{\mu^t[\psi_1]}(v)\bigr)\\
&= \int_{S_\rho}\psi_1^c\,\d\rho- \int_{S_\rho}\psi_0^c\,\d\rho+\int_\Omega (\psi_1-\psi_0) \,\d\mu_1.
\end{aligned}
\end{equation}
Then the lemma follows from the Kantorovich duality.
\end{proof}

\begin{proposition}\label{3.22}
It holds that
\begin{equation*}
D_\rho(\mu_1,\mu_0)\geq \frac{1}{\kappa^2}\int_0^1 s \varliminf_{t\rightarrow0}\|\E_{\mu_x^t[\psi_s]}(v)-\E_{\mu^t[\psi_s]}(v)\|_{L^1(\rho)}^2\,\d s.
\end{equation*}
\end{proposition}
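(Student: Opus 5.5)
Starting from Lemma~\ref{l3.4}, we have
\[
D_\rho(\mu_1,\mu_0)=\lim_{t\to0}\int_0^1 -s\,\frac{\d^2}{\d s^2}\K_t[\psi_s]\,\d s
=\lim_{t\to0}\int_0^1 \frac{s}{t}\int_{S_\rho}\Var_{\mu_x^t[\psi_s]}(v)\,\d\rho(x)\,\d s ,
\]
where the second equality is the second bullet of Lemma~\ref{l3.2}. The integrand is nonnegative, so the limit exists (Lemma~\ref{l3.4} already asserts this). We may therefore write it as a $\liminf$ and bound it from below pointwise in $s$.

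Fix $s\in[0,1]$ and $t$ small. Square the third bullet of Lemma~\ref{l3.2} and rearrange:
\[
\frac1t\int_{S_\rho}\Var_{\mu_x^t[\psi_s]}(v)\,\d\rho(x)\;\geq\;\frac{1}{\kappa^2}\,\bigl\|\E_{\mu_x^t[\psi_s]}(v)-\E_{\mu^t[\psi_s]}(v)\bigr\|_{L^1(\rho)}^2 .
\]
The factor $1/t$ on the left and the $\kappa^2/t$ from squaring cancel exactly, so the constant is independent of $t$. Multiply by $s\ge 0$ and integrate over $s\in[0,1]$. This gives, for each small $t$,
\[
\int_0^1 -s\,\frac{\d^2}{\d s^2}\K_t[\psi_s]\,\d s\;\geq\;\frac1{\kappa^2}\int_0^1 s\,\bigl\|\E_{\mu_x^t[\psi_s]}(v)-\E_{\mu^t[\psi_s]}(v)\bigr\|_{L^1(\rho)}^2\,\d s .
\]

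Now pass to the limit $t\to0$. The left side converges to $D_\rho(\mu_1,\mu_0)$. On the right we take $\varliminf_{t\to0}$ and move it inside the $s$-integral by Fatou's lemma, which applies because the integrands are nonnegative. Fatou gives
\[
\varliminf_{t\to0}\int_0^1 s\,\|\cdot\|^2\,\d s\;\geq\;\int_0^1 s\,\varliminf_{t\to0}\|\cdot\|^2\,\d s ,
\]
and this is exactly the claimed inequality.

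The main point needing care is the application of Fatou's lemma. We must check that the map $s\mapsto\|\E_{\mu_x^t[\psi_s]}(v)-\E_{\mu^t[\psi_s]}(v)\|_{L^1(\rho)}$ is measurable. In fact it is continuous in $s$, since $\psi_s$ depends affinely on $s$ and the Gibbs measures $\mu_x^t[\psi_s]$ depend smoothly on $s$. We must also check that Lemma~\ref{l3.2} holds for all $s\in[0,1]$ with a threshold on $t$ that does not depend on $s$. This should follow because the Lipschitz constants of the $\psi_s$ are uniformly bounded by those of $\psi_0$ and $\psi_1$. Once these two facts are in place, the remaining steps are routine.
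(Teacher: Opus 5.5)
Your proposal is correct and is exactly the paper's argument: the paper's one-line proof combines Lemma~\ref{l3.4}, the second and third items of Lemma~\ref{l3.2} (squared, so that the factors $1/t$ cancel) and Fatou's lemma, just as you do. Two minor remarks. First, the existence of the limit comes from Lemma~\ref{l3.4}, not from nonnegativity of the integrand. Second, Fatou for the continuous parameter $t\to0$ is applied along a sequence $t_j\to0$ that realizes the $\varliminf$ of the $s$-integrals, noting that the pointwise $\varliminf$ along that sequence dominates the $\varliminf$ as $t\to0$.
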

\begin{proof}
It follows from Lemma~\ref{l3.2}, Lemma~\ref{l3.4} and Fatou's lemma.
\end{proof}

\subsection{Limit analysis}
Denote 
\[
\phi_s(x):=\inf_{y\in \Omega}\big\{c(x,y)-\psi_s(y)\big\}=\psi_s^c(x), \quad x\in S_{\rho}.
\]

\begin{lemma}\label{mini}
For $\rho$-a.e.\ $x\in S_\rho$, the function $y\mapsto c(x,y)-\psi_s(y)$ has a unique minimizer $T_s(x)$.
\end{lemma}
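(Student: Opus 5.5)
We need uniqueness of the minimizer of $y\mapsto c(x,y)-\psi_s(y)$ for $\rho$-a.e.\ $x$. The strategy is to exploit semiconcavity of $\phi_s=\psi_s^c$ together with the Alexandrov first-order calculus. Existence of a minimizer is immediate: $\Omega$ is compact and $y\mapsto c(x,y)-\psi_s(y)$ is continuous, since $\psi_s$ is Lipschitz, being a convex combination of Lipschitz potentials. The work is in uniqueness.

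\textbf{Step 1: semiconcavity and differentiability of $\phi_s$.} For each $y\in\Omega$, the function $x\mapsto c(x,y)=\frac12\dist^2(x,y)$ is $\lambda$-semiconcave on a neighborhood of $\Omega$, with $\lambda$ depending only on $k$ and $\diam(\Omega)$; this is standard in Alexandrov spaces with curvature $\ge k$. The function $\phi_s$ is an infimum of these functions shifted by the constants $-\psi_s(y)$, so it is $\lambda$-semiconcave on the open set $S_\rho$. By Perelman's (and Otsu–Shioya's) theory, a semiconcave function on an $n$-dimensional Alexandrov space is differentiable at $\mathcal H^n$-a.e.\ point. More precisely, at every regular point its differential $d_x\phi_s$ is a linear function on the tangent cone $T_x\cong\mathbb R^n$, so it is represented by a gradient vector $\nabla\phi_s(x)$. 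Singular points form an $\mathcal H^n$-null set. Since $\rho\le M\mathcal H^n\llcorner S_\rho$, the set of good points $x$ (regular, with $\phi_s$ differentiable at $x$) has full $\rho$-measure.

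\textbf{Step 2: the first-order condition pins down the minimizer.} Fix such a good $x$, and let $y$ be any minimizer. Then $c(\cdot,y)-\psi_s(y)\ge\phi_s$ everywhere, with equality at $x$, so $c(\cdot,y)$ touches $\phi_s$ from above at $x$. Compare along a geodesic direction $\xi\in T_x$. The first variation formula gives
\[
d_x c(\cdot,y)(\xi)=-\dist(x,y)\max\{\langle\xi,\uparrow\rangle:\uparrow\in\Uparrow_x^y\}.
\]
Touching from above then yields $d_x\phi_s(\xi)\le d_x c(\cdot,y)(\xi)$ for all $\xi$. The left side is linear and the right side is concave, being minus a maximum of linear functions. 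Applying the inequality to both $\xi$ and $-\xi$ forces equality. Hence the set of directions $\Uparrow_x^y$ reduces to a single direction $\uparrow$, and
\[
\nabla\phi_s(x)=-\dist(x,y)\uparrow.
\]
So the direction and length of every geodesic from $x$ to any minimizer $y$ are determined by $\nabla\phi_s(x)$, namely $y=\exp_x(-\nabla\phi_s(x))$ in the sense of Alexandrov geodesics.

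\textbf{Step 3 (main obstacle): uniqueness of geodesics.} Step 2 fixes the initial direction and the length of the geodesic, but in an Alexandrov space geodesics may branch only at the initial point, not in the interior. Here is the argument I would try first. Suppose $y_1\ne y_2$ are both minimizers. By Step 2 there are geodesics from $x$ to $y_1$ and from $x$ to $y_2$ with the same initial direction and the same length $L=|\nabla\phi_s(x)|$. In an Alexandrov space two geodesics starting at $x$ with zero angle must coincide on their common domain: this is Toponogov comparison (hinge version) applied to the hinge with angle $0$, which gives $\dist(\gamma_1(r),\gamma_2(r))=o(r)$. Non-branching then propagates agreement along the whole length. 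Alternatively, one can use a direct comparison giving $\dist(y_1,y_2)\le$ (comparison-space distance for angle $0$) $=0$. Either way $y_1=y_2$.

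If this hinge argument needs care at the endpoint, there is a fallback. Pass to the midpoint: for $\rho$-a.e.\ $x$, both geodesics stay inside the open set where the argument applies, and non-branching of Alexandrov geodesics gives the conclusion. Measurability of $x\mapsto T_s(x)$ then follows from the closed graph of the argmin set.
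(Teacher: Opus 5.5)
Your proof is correct and shares the paper's skeleton: existence by compactness, a.e.\ differentiability of $\phi_s$ on $S_\rho$, and the fact that at such a point every minimizer equals $\exp_x(-\nabla\phi_s(x))$. For the middle step the paper cites Rademacher's theorem for the Lipschitz function $\phi_s$ via Bertrand. You instead use semiconcavity and the Otsu--Shioya/Perelman calculus at regular points, which amounts to the same thing since $\rho\ll\mathcal H^n$.

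The genuine difference is how the last fact is obtained. The paper first places any minimizer $y_x$ in $\partial^c\phi_s(x)$ by checking $\psi_s^{\mathrm{cc}}(y_x)=\psi_s(y_x)$. It then quotes Petrunin's Lemma~3.5 that $\partial^c\phi_s(x)$ is a singleton at differentiability points. You bypass the double transform, correctly noting that the touching inequality $c(\cdot,y)-\psi_s(y)\ge\phi_s$, with equality at $x$, is immediate from the definition of $\phi_s$ as an infimum. You then reprove the singleton property by hand. The first variation formula, together with linearity of $d_x\phi_s$ on $T_x\cong\mathbb R^n$, fixes the direction and length of every geodesic from $x$ to a minimizer, and angle comparison finishes. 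Your route is self-contained and shows exactly where the curvature bound enters; the paper's is shorter but rests on the citation.

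In Step~3, rely only on the direct comparison. Since $\tilde\angle_k\, y_1xy_2\le\angle y_1xy_2=0$ and $\dist(x,y_1)=\dist(x,y_2)$, you get $\dist(y_1,y_2)=|\dist(x,y_1)-\dist(x,y_2)|=0$. Your first sketch ($o(r)$ closeness plus non-branching) would not by itself give coincidence, and the midpoint fallback is unnecessary.
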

\begin{proof}
Since $\Omega$ is compact and $c(x,y), \psi_s(y)$ are continuous in $y\in \Omega$, for any $x\in S_{\rho}$, there exists a minimizer $y_x$ so that $\psi_s^c(x)+\psi_s(y_x)=c(x,y_x)$.
Note that $\psi_s(y)\leq c(x,y)-\psi_s^c(x)$. 
We have $\psi_s(y)\leq \psi_s^{\mathrm{cc}}(y)$
and 
\[
\psi_s^{\mathrm{cc}}(y_x)\leq c(x,y_x)-\psi_s^c(x)=\psi_s(y_x).
\]
Thus $\psi^{\mathrm{cc}}_s(y_x)=\psi_s(y_x)$ and it holds 
\begin{equation}
\phi_s(x)+\phi_s^c(y_x)=\psi_s^c(x)+\psi^{\mathrm{cc}}_s(y_x)=\psi_s^c(x)+\psi_s(y_x)=c(x,y_x),
\end{equation}
which implies $y_x\in \partial^c\phi_s(x)$. 

Note that $\phi_s$ is $c$-concave and Lipschitz. By Rademacher's theorem (cf.\ \cite[Corollary~2.14]{zbMATH05349267}), $\phi_s$ is differentiable at $\rho$-a.e.\ $x\in S_\rho$, and by \cite[Lemma~3.5]{zbMATH05342782},
\[
T_s(x):=\exp_x(-\nabla\phi_s(x)),\qquad \partial^c\phi_s(x)=\{T_s(x)\}.
\]
This proves the claim.
\end{proof}

\begin{lemma}\label{limit}
For $s\in [0,1]$, it holds that 
\begin{equation*}
\lim_{t\rightarrow0}\E_{\mu_x^t[\psi_s]}(v)=v(T_s(x)),\quad\text{for $\rho$-a.e.\ $x\in S_{\rho}$}.
\end{equation*}
Moreover, we have
\begin{equation*}
\lim_{t\rightarrow0}\E_{\mu^t[\psi_s]}(v)=\E_{\rho}(v\circ T_s). 
\end{equation*}
\end{lemma}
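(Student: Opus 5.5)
The plan is a Laplace-principle (large-deviation) argument for the Gibbs-type measures $\mu_x^t[\psi_s]$ as $t\to0$, followed by dominated convergence to integrate over $x$.

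\emph{Step 1: concentration of $\mu_x^t[\psi_s]$.} Fix $x\in S_\rho$ at which Lemma~\ref{mini} applies, so that $y\mapsto c(x,y)-\psi_s(y)$ has the unique minimizer $T_s(x)$ over $\Omega$. I will use two-sided Gaussian (Li--Yau type) heat kernel bounds on Alexandrov spaces with curvature $\geq k$, valid for small $t$ and locally uniformly:
\[
\frac{C^{-1}}{\mm(B_{\sqrt{t}}(x))}e^{-\frac{\dist^2(x,y)}{(2-\epsilon)t}}\leq p_{t/2}(x,y)\leq \frac{C}{\mm(B_{\sqrt{t}}(x))}e^{-\frac{\dist^2(x,y)}{(2+\epsilon)t}}.
\]
Varadhan's formula gives the sharper statement $-t\log p_{t/2}(x,y)\to \frac12\dist^2(x,y)=c(x,y)$ uniformly on compacts. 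The density of $\mu_x^t[\psi_s]$ is then proportional to $\exp\bigl(-(c(x,y)-\psi_s(y)+o(1))/t\bigr)$, up to polynomial prefactors in $t$. For $\delta>0$, set
\[
\beta_\delta:=\min\{c(x,y)-\psi_s(y): y\in\Omega,\ \dist(y,T_s(x))\geq\delta\}-\phi_s(x),
\]
which is positive by uniqueness of the minimizer and compactness. The numerator mass outside $B_\delta(T_s(x))$ is then at most $e^{-(\phi_s(x)+\beta_\delta-o(1))/t}$. For the denominator, Lipschitz continuity of $\psi_s$ and $c$ together with the Ahlfors-regular lower volume bound $\mathcal{H}^n(B_r)\gtrsim r^n$ for small balls in $\Omega$ give a lower bound $\gtrsim t^{N}e^{-(\phi_s(x)+o(1))/t}$ from a ball of radius $\sim t$ around $T_s(x)$. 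Hence $\mu_x^t[\psi_s](X\setminus B_\delta(T_s(x)))\to0$. The contribution of $y\notin\Omega$ (or of the far part of the extended potential) is handled by the decay hypothesis, i.e.\ the truncation in \cite[Section~2]{arXiv:2602.19175}, since that hypothesis makes $\psi_s(y)-c(x,y)\to-\infty$ fast enough. This is the same mechanism as in \cite[Lemmas~2.12--2.13]{arXiv:2602.19175}, which I would cite if they already contain it.

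\emph{Step 2: convergence of the expectation.} Since $v$ is Lipschitz with constant $L$,
\[
|\E_{\mu_x^t[\psi_s]}(v)-v(T_s(x))|\leq L\delta+\E_{\mu_x^t[\psi_s]}\bigl(|v-v(T_s(x))|\mathbf 1_{B_\delta^c}\bigr).
\]
The last term tends to zero: on $\Omega$, $v$ is bounded and Step 1 applies, while outside $\Omega$ the Gaussian tail beats the at most linear growth of $v$. Letting $t\to0$ and then $\delta\to0$ gives the first claim for $\rho$-a.e.\ $x$.

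\emph{Step 3: the averaged statement.} By definition, $\E_{\mu^t[\psi_s]}(v)=\int_{S_\rho}\E_{\mu_x^t[\psi_s]}(v)\,\d\rho(x)$. The integrands are bounded uniformly in $x\in S_\rho$ and small $t$: the same heat-kernel bounds give a uniform bound on the exponential moments of $\dist(y,\Omega)$ under $\mu_x^t$, and $v$ is Lipschitz. Dominated convergence then yields $\lim_{t\to0}\E_{\mu^t[\psi_s]}(v)=\E_\rho(v\circ T_s)$.

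The main obstacle is Step 1. Its two delicate points are the uniformity of Varadhan-type asymptotics on a singular space, and the lower bound on the normalizing integral near $T_s(x)$, which may lie at a singular point or on $\partial\Omega$. Pointwise convergence suffices for the per-$x$ statement. The volume lower bound near $T_s(x)$ follows from Bishop--Gromov on $X$, since the integral is over $X$ rather than over $\Omega$.
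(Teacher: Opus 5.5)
Your overall route is the paper's. Its proof is just Lemma~\ref{mini} combined with \cite[Lemma~2.13]{arXiv:2602.19175}: uniqueness of the minimizer of $c(x,\cdot)-\psi_s$, Laplace-type concentration of $\mu_x^t[\psi_s]$, and dominated convergence in $x$. Your Steps~1--3 reprove that cited concentration by hand. The heat-kernel/Varadhan asymptotics, the Bishop--Gromov lower bound on the normalizing integral, and Step~3 are fine.

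\textbf{The gap: points of $X\setminus\Omega$ close to $\Omega$.} The step that fails as written is your treatment of $y\notin\Omega$.

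\emph{What goes wrong.} The measures $\mu_x^t[\psi_s]$ live on $X$. They concentrate on the minimizers over all of $X$ of $y\mapsto c(x,y)-\psi_s(y)$, where $\psi_s$ is the \emph{extended} potential. By contrast, $\phi_s$, $T_s$ and your $\beta_\delta$ are defined by minimizing over $\Omega$ only. The decay hypothesis controls only $y$ far from $\Omega$. It says nothing about points just outside $\Omega$, where the extended functional can drop strictly below $\phi_s(x)$.

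\emph{Counterexample.} Take $X=\mathbb R$, $\Omega=[0,1]$ and $\psi(y)=\frac12y^2-y$ on $\Omega$ (so $\Lip\psi=1$).
\begin{enumerate}[{\rm (i)}]
\item One computes $\psi^c(x)=\frac12x^2$, with unique minimizer $T(x)=0$ for $x\in[0,1)$. Hence $\psi$ is a Kantorovich potential for $(\rho,\delta_0)$ with, e.g., $\rho$ uniform on $(0,1)$.
\item The McShane extension $\inf_{z\in\Omega}\{\psi(z)+|y-z|\}$ equals $-y$ for $y<0$.
\item On $y<0$, the function $c(x,y)+y$ is minimized at $y=x-1$, with value $x-\frac12<\frac12x^2$. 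This point is also the global minimizer over $X$.
\item Therefore $\mu_x^t[\psi]\rightharpoonup\delta_{x-1}$, not $\delta_{T(x)}=\delta_0$. This happens even though $\psi-c(x,\cdot)\to-\infty$ at infinity.
\end{enumerate}

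\emph{What you actually need.} The required condition is
\[
\inf\{c(x,y)-\psi_s(y):\ y\in X\setminus B_\delta(T_s(x))\}>\phi_s(x)\qquad\text{for every }\delta>0 .
\]
This is a property of the particular extension, and decay cannot supply it. In the example, the minimal extension $\sup_{z}\{\psi(z)-\dist(y,z)\}$ satisfies it, while the McShane extension does not. This is exactly the point the paper delegates to the specific extension used in \cite[Lemmas~2.12--2.13]{arXiv:2602.19175}.

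\emph{How to close it.} You can invoke that lemma for the extension it actually uses. Alternatively, fix an extension with the property above and check that it survives the interpolation $\psi_s=\psi_0+sv$, $s\in[0,1]$. For instance, for convex $\Omega\subset\mathbb R^n$ with metric projection $\pi_\Omega$, extend so that the extended $\psi_i$ satisfy $\psi_i(y)\le\psi_i(\pi_\Omega(y))$ off $\Omega$. This is preserved by convex combinations and gives $c(x,y)-\psi_s(y)\ge\phi_s(x)+\frac12\dist(y,\Omega)^2$. Your $\beta_\delta$ must then be taken as the infimum over $X\setminus B_\delta(T_s(x))$, not over $\Omega$ only.
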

\begin{proof}
This follows directly by combining Lemma~\ref{mini} and \cite[Lemma 2.13]{arXiv:2602.19175}.     
\end{proof}

\begin{proposition}\label{limits}
It holds that
\begin{equation*}
D_\rho(\mu_1,\mu_0)\geq \frac{1}{\kappa^2}\int_0^1 s \|v\circ T_s-\E_\rho(v\circ T_s)\|_{L^1(\rho)}^2\,\d s.
\end{equation*}
\end{proposition}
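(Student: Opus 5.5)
The plan is to pass to the limit $t\to0$ inside the lower bound of Proposition~\ref{3.22}, using the pointwise limits of Lemma~\ref{limit}. Proposition~\ref{3.22} gives
\[
D_\rho(\mu_1,\mu_0)\geq \frac{1}{\kappa^2}\int_0^1 s\, \varliminf_{t\to0}\bigl\|\E_{\mu_x^t[\psi_s]}(v)-\E_{\mu^t[\psi_s]}(v)\bigr\|_{L^1(\rho)}^2\,\d s .
\]
Hence it suffices to show that, for each fixed $s\in[0,1]$,
\[
\lim_{t\to0}\bigl\|\E_{\mu_x^t[\psi_s]}(v)-\E_{\mu^t[\psi_s]}(v)\bigr\|_{L^1(\rho)}=\|v\circ T_s-\E_\rho(v\circ T_s)\|_{L^1(\rho)}.
\]
The $\varliminf$ is then an honest limit equal to the claimed quantity, and substituting it into the $s$-integral gives the proposition.

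To prove this identity for fixed $s$, set $f_t(x):=\E_{\mu_x^t[\psi_s]}(v)-\E_{\mu^t[\psi_s]}(v)$. Lemma~\ref{limit} gives $\E_{\mu_x^t[\psi_s]}(v)\to v(T_s(x))$ for $\rho$-a.e.\ $x\in S_\rho$. It also gives $\E_{\mu^t[\psi_s]}(v)\to\E_\rho(v\circ T_s)$, which is a constant independent of $x$. Therefore $f_t(x)\to v(T_s(x))-\E_\rho(v\circ T_s)$ for $\rho$-a.e.\ $x$. For domination, note that $\mu_x^t[\psi_s]$ and $\mu^t[\psi_s]$ are probability measures. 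So $|f_t(x)|\le 2\|v\|_{L^\infty}$, where the supremum is taken over the support of the extended potentials. This is finite because $v=\psi_1-\psi_0$ is the difference of two Lipschitz functions that are bounded on $\Omega$, and the fixed McShane extension with truncation keeps them bounded on $X$. Since $\rho$ is a probability measure, dominated convergence gives $\|f_t\|_{L^1(\rho)}\to\|v\circ T_s-\E_\rho(v\circ T_s)\|_{L^1(\rho)}$. Continuity of $r\mapsto r^2$ then handles the square.

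The main obstacle is the uniform bound on $v$. The measures $\mu_x^t[\psi_s]$ live on all of $X$, not on $\Omega$. If the truncation in \cite[Section~2]{arXiv:2602.19175} did not make $v$ bounded, I would use a different domination: the Gaussian-type concentration of $\mu_x^t$ near $T_s(x)$, with the heat kernel bounds uniform on the compact set $\Omega$, together with the Lipschitz growth of $v$. These give $|\E_{\mu_x^t[\psi_s]}(v)|\le C$ uniformly in $x\in S_\rho$ and small $t$, and that bound is also what underlies Lemma~\ref{limit}. Measurability of $s\mapsto\|v\circ T_s-\E_\rho(v\circ T_s)\|_{L^1(\rho)}$ should be checked, but it already follows from Proposition~\ref{3.22} together with the pointwise identification.
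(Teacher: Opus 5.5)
Your proposal is correct and follows the paper's own argument. The paper's proof combines Proposition~\ref{3.22} with the pointwise limits of Lemma~\ref{limit} and applies dominated convergence in $L^1(\rho)$ for each fixed $s$, so that the $\varliminf$ becomes a genuine limit. Your discussion of the uniform bound on $v$, via the common Lipschitz extension and truncation or the heat-kernel concentration fallback, just makes explicit the domination that the paper leaves implicit.
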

\begin{proof}
It follows from Proposition~\ref{3.22}, Lemma~\ref{limit} and the dominated convergence theorem. 
\end{proof}

\subsection{Stability of optimal transport}

\begin{lemma}\label{3.7}
It holds that
\[
\frac{\d}{\d s} \phi_s(x)=-v(T_s(x)), \quad\text{for $\mathcal{L}^1\otimes\rho$-a.e. $(s,x)\in [0,1]\times S_{\rho}$},
\]
where at the endpoints $s=0,1$ the derivative is understood in the one-sided sense.
\end{lemma}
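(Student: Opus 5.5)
The approach is the classical envelope (Danskin) argument for the infimum defining $\phi_s=\psi_s^c$. Fix $x\in S_\rho$. Since $\psi_s=\psi_0+sv$ depends affinely on $s$, the function
\[
s\mapsto \phi_s(x)=\inf_{y\in\Omega}\{c(x,y)-\psi_0(y)-s\,v(y)\}
\]
is an infimum of affine functions of $s$. It is therefore concave on $[0,1]$, and it is Lipschitz in $s$ with constant $\|v\|_{L^\infty(\Omega)}$, which is finite because $\Omega$ is compact and $v$ is Lipschitz. Consequently, for every $x$, the map $s\mapsto\phi_s(x)$ is differentiable except at countably many $s$, and it has one-sided derivatives everywhere.

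The derivative is identified from the one-sided inequalities given by minimizers. Let $y_s$ be any minimizer for $\phi_s(x)$ (one exists by compactness, as in Lemma~\ref{mini}). For $h>0$, testing the infimum at $s\pm h$ with $y_s$ gives
\[
\phi_{s+h}(x)-\phi_s(x)\le -h\,v(y_s),\qquad \phi_{s-h}(x)-\phi_s(x)\le h\,v(y_s).
\]
Hence $\partial_s^+\phi_s(x)\le -v(y_s)\le \partial_s^-\phi_s(x)$, and this holds for every minimizer $y_s$. At any $s$ where $\phi_\cdot(x)$ is differentiable, this forces $\partial_s\phi_s(x)=-v(y_s)$. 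When the minimizer is unique, the sharper Danskin formula applies: $\partial_s^\pm\phi_s(x)=-\max/\min$ of $v$ over the argmin, which is the single value $-v(T_s(x))$. This also handles the one-sided derivatives at the endpoints $s=0,1$.

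It remains to match the minimizer with $T_s(x)$ on a set of full $\mathcal L^1\otimes\rho$ measure. Lemma~\ref{mini} gives, for each fixed $s$, a $\rho$-null set $N_s$ outside of which the minimizer is unique and equals $T_s(x)$. The set
\[
E=\{(s,x): \text{the minimizer is unique}\}
\]
should first be checked to be measurable. For this I would use lower semicontinuity of $(s,x,y)\mapsto c(x,y)-\psi_s(y)$ and express uniqueness through the vanishing of the diameter of the argmin set, which is an upper semicontinuous set-valued map. Given measurability, Fubini's theorem shows that $E^c$ is $\mathcal L^1\otimes\rho$-null, because each $s$-slice is $\rho$-null. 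On $E$, either of the two observations above gives $\partial_s\phi_s(x)=-v(T_s(x))$. One can alternatively slice in $x$: for $\rho$-a.e. $x$, the set of $s$ with nondifferentiability or non-uniqueness is $\mathcal L^1$-null. This is the form needed later when integrating $\partial_s\phi_s$ in $s$, and it is legitimate since $s\mapsto\phi_s(x)$ is Lipschitz and hence absolutely continuous.

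The main obstacle is this measurability and Fubini step. Lemma~\ref{mini} is stated for each fixed $s$ separately, so the $\rho$-null exceptional set may vary with $s$, and joint measurability of the exceptional set has to be verified. If the semicontinuity argument is awkward, I would instead define $T_s(x)$ through a measurable selection of the argmin (Kuratowski--Ryll-Nardzewski). By Lemma~\ref{mini}, this selection agrees with the unique minimizer for a.e. $x$ at each $s$. The Danskin inequalities then give the derivative formula at every differentiability point, and Fubini concludes.
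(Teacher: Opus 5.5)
Your proposal is correct and is essentially the paper's argument. Both test the infimum at nearby parameters with a minimizer, then use uniqueness of the minimizer at $s$ to identify the one-sided derivatives with $-v(T_s(x))$; the paper proves your cited Danskin formula by hand, via a compactness/subsequence argument on minimizers at $s+h$. Your additions fill in points the paper passes over: concavity and $\|v\|_{L^\infty}$-Lipschitz continuity of $s\mapsto\phi_s(x)$, working with arbitrary minimizers rather than $T_{s+h}(x)$, and checking Borel measurability of the uniqueness set before applying Fubini. The Lipschitz bound is also precisely what justifies integrating \eqref{ds} in $s$ in Proposition~\ref{gs}.
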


\begin{proof}
By Lemma~\ref{mini} and Fubini's theorem, for $\mathcal{L}^1\otimes\rho$-a.e. $(s,x)$ the minimizer $T_s(x)$ is unique. Fix such a pair $(s,x)$. For $s_1,s_2\in[0,1]$ such that $T_{s_2}(x)$ is well-defined, the definition of $\phi_{s_1}$ gives
\begin{equation}\label{2.6}
\phi_{s_1}(x)\leq c(x,T_{s_2}(x))-\psi_{s_1}(T_{s_2}(x))
=\phi_{s_2}(x)+(s_2-s_1)v(T_{s_2}(x)).
\end{equation}
Taking $s_1=s+h$ and $s_2=s$ in \eqref{2.6} yields, for $h>0$,
\begin{equation}\label{29}
\varlimsup_{h\downarrow0}\frac{\phi_{s+h}(x)-\phi_s(x)}{h}\leq -v(T_s(x)).
\end{equation}
Conversely, taking $s_1=s$ and $s_2=s+h$ gives
\[
\frac{\phi_{s+h}(x)-\phi_s(x)}{h}\geq -v(T_{s+h}(x)).
\]
If $h_j\downarrow0$ and $T_{s+h_j}(x)$ converges along a subsequence to $y$, compactness of $\Omega$ and continuity of $c$ and $\psi_s$ imply that $y$ is a minimizer of $c(x,\cdot)-\psi_s$. By uniqueness, $y=T_s(x)$. Hence $v(T_{s+h_j}(x))\to v(T_s(x))$ along every convergent subsequence, and therefore
\begin{equation}\label{31}
\varliminf_{h\downarrow0}\frac{\phi_{s+h}(x)-\phi_s(x)}{h}\geq -v(T_s(x)).
\end{equation}
Combining \eqref{29} and \eqref{31} gives the right derivative. The left derivative is obtained in the same way, and the endpoint statements follow by the corresponding one-sided argument.
\end{proof}

\begin{lemma}[Lipschitz control of the Alexandrov exponential map]
\label{lem:alex-exp-comparison}
Let $X$ be an Alexandrov space with curvature bounded below by $k$, and let $D>0$. There exists $C_{\exp}=C_{\exp}(k,D)>0$ such that, for every $x\in X$ and every pair of tangent vectors $u,v\in T_xX$ with $|u|,|v|\leq D$ for which the endpoints $\exp_x(u)$ and $\exp_x(v)$ are defined along minimizing geodesics,
\[
\dist(\exp_x(u),\exp_x(v))\leq C_{\exp}|u-v|_{T_xX}.
\]
Consequently, if $\phi_0,\phi_1$ are $c$-concave potentials whose associated optimal maps are $T_i(x)=\exp_x(-\nabla\phi_i(x))$ and whose images lie in $\Omega$, then for a.e. such $x$,
\[
\dist(T_1(x),T_0(x))\leq C_{\exp}|\nabla\phi_1(x)-\nabla\phi_0(x)|_{T_xX},
\]
where $C_{\exp}$ depends only on $k$ and $\diam(\Omega)$.
\end{lemma}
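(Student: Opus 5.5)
The plan is to reduce the estimate to Toponogov comparison for hinges at the base point $x$. Write $\gamma_u(t)=\exp_x(tu)$ and $\gamma_v(t)=\exp_x(tv)$, $t\in[0,1]$, for the minimizing geodesics realizing the endpoints. Their lengths are $|u|,|v|\le D$ and the angle between them at $x$ is $\theta=\angle(u,v)$. In the tangent cone $T_xX$, which is a Euclidean cone over the space of directions, the norm of the difference is given by the cosine law:
\[
|u-v|_{T_xX}^2=|u|^2+|v|^2-2|u||v|\cos\theta .
\]
So the claim is a comparison between $\dist(\exp_x u,\exp_x v)$ and the third side of a flat triangle with the same hinge data.

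\textbf{Step 1 (nonnegative $k$).} If $k\ge0$, Toponogov's hinge comparison for lower curvature bounds gives $\dist(\exp_x u,\exp_x v)\le \tilde d_k$. Here $\tilde d_k$ is the third side of the comparison hinge in the model plane $M_k^2$ with sides $|u|,|v|$ and angle $\theta$. Since $k\ge 0$, the model third side is at most the Euclidean one, so $\tilde d_k\le|u-v|$, and we may take $C_{\exp}=1$.

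\textbf{Step 2 (negative $k$, the main obstacle).} For $k<0$ we cannot take $C_{\exp}=1$. Instead I would bound the hyperbolic third side by a constant multiple of the Euclidean one, uniformly for sides in $[0,D]$. I would first try to compare both third sides with an intermediate quantity. By the triangle inequality in the model space,
\[
\tilde d_k\le \bigl||u|-|v|\bigr|+\sinh_k\text{-chord}\bigl(\min(|u|,|v|),\theta\bigr),
\]
where the chord between two points at equal distance $r$ and angle $\theta$ is $2\,\mathrm{arcsinh}\bigl(\sinh(\sqrt{-k}\,r)\sin(\theta/2)\bigr)/\sqrt{-k}\le C(k,D)\,r\sin(\theta/2)$. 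The Euclidean side satisfies the reverse-type bound $|u-v|\ge c\bigl(||u|-|v||+\min(|u|,|v|)\sin(\theta/2)\bigr)$, which is elementary from the cosine law. Combining the two gives $C_{\exp}(k,D)$ of order $\sinh(\sqrt{-k}D)/(\sqrt{-k}D)$. Alternatively, this constant can be read off from the standard Rauch-type comparison $\mathrm{sn}_k(r)/r$.

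\textbf{Step 3 (the consequence).} I would apply the inequality with $u=-\nabla\phi_1(x)$ and $v=-\nabla\phi_0(x)$ at points where both potentials are differentiable and both maps are given by Lemma~\ref{mini}; this is a $\rho$-a.e. set. The curves $t\mapsto\exp_x(-t\nabla\phi_i(x))$ are minimizing geodesics from $x$ to $T_i(x)$. Hence $|\nabla\phi_i(x)|=\dist(x,T_i(x))\le\diam(\Omega)$, so we may take $D=\diam(\Omega)$. Finally, $|u-v|_{T_xX}=|\nabla\phi_1(x)-\nabla\phi_0(x)|$, because negation is realized by the same angle in the cone. This needs some care when the space of directions is not symmetric; in that case I would interpret $\nabla\phi_i$ as the gradient vectors with $-\nabla\phi_i$ the given directions, as in \cite{zbMATH05342782}.
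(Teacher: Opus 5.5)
Your proposal is correct and follows the same route as the paper. Toponogov's hinge comparison at $x$ reduces the claim to comparing the model-plane third side with the cosine-law distance $|u-v|_{T_xX}$ for $|u|,|v|\le D$, and your explicit estimate in Step~2, with constant of order $\sinh(\sqrt{-k}D)/(\sqrt{-k}D)$, supplies the Lipschitz bound that the paper only asserts; in Step~3 the asymmetry concern does not arise, since $\rho\ll\mathcal{H}^n$ and $T_xX\cong\mathbb{R}^n$ at $\mathcal{H}^n$-a.e.\ $x$, so negation and differences of gradients are the usual Euclidean ones there.
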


\begin{proof}
This is a standard consequence of the hinge, triangle, and angle comparison estimates in Alexandrov geometry. Toponogov comparison bounds the side opposite the angle between two geodesics from $x$ by the corresponding side in the two-dimensional model space of curvature $k$. On the compact range $|u|,|v|\leq D$, the model-space law of cosines is Lipschitz with respect to the tangent-cone distance $|u-v|_{T_xX}$, which gives the asserted constant; see, for example, \cite[Chapter~10]{BuragoBuragoIvanov2001} and \cite{zbMATH05342782,zbMATH06032507}. The final statement follows by applying this estimate to $u=-\nabla\phi_0(x)$ and $v=-\nabla\phi_1(x)$ at points where the potentials are differentiable and the optimal maps are represented by the Alexandrov exponential map.
\end{proof}

\begin{proposition}\label{gs}
Assume that $(\phi_i,\psi_i)$, $i=0,1$, are Kantorovich pairs for $(\rho,\mu_i)$, with $\phi_i=\psi_i^c$, and that $\E_\rho(\phi_i)=0$ for $i=0,1$. Denote
\[
g(s)=\|v\circ T_s-\E_\rho(v\circ T_s)\|_{L^1(\rho)}.
\]
Then
\[
\int_0^1 g(s)\,\d s\geq C_2 W_2^6(\mu_0,\mu_1),
\]
where $C_2>0$ depends only on $k,n,\eta,m,M,\Per$ and $\diam(\Omega)$.
\end{proposition}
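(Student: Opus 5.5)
We would chain three estimates: an $L^1$ control of $\phi_1-\phi_0$ by $\int_0^1 g$, the potential-stability Theorem~\ref{thm:quoted-potential-stability}, and the exponential-map comparison Lemma~\ref{lem:alex-exp-comparison}.

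\emph{Step 1: $\int_0^1 g$ controls $\|\phi_1-\phi_0\|_{L^1(\rho)}$.} For $\rho$-a.e.\ $x$, the map $s\mapsto\phi_s(x)$ is Lipschitz in $s$, since it is an infimum of affine functions of $s$ with slopes $-v(y)$ and $|v|\le\|v\|_\infty$. By Lemma~\ref{3.7},
\[
\phi_1(x)-\phi_0(x)=-\int_0^1 v(T_s(x))\,\d s .
\]
Integrating in $\rho$ and using $\E_\rho(\phi_i)=0$ gives $\int_0^1\E_\rho(v\circ T_s)\,\d s=0$. Subtracting this constant, we get
\[
\phi_1-\phi_0=-\int_0^1\bigl(v\circ T_s-\E_\rho(v\circ T_s)\bigr)\d s .
\]
Fubini then yields $\|\phi_1-\phi_0\|_{L^1(\rho)}\le\int_0^1 g(s)\,\d s$. (Note that $\phi_s$ itself need not be normalized for $0<s<1$, but only the endpoints matter here.)

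\emph{Step 2: pass from $L^1$ to $L^2$ and apply Theorem~\ref{thm:quoted-potential-stability}.} The potentials $\phi_i$ are $c$-concave on the compact set $\Omega$, so they are Lipschitz with constant at most $\diam(\Omega)$. Since they have zero $\rho$-mean on the connected set $S_\rho$, they are uniformly bounded by $\diam(\Omega)^2$ up to a harmless factor. Hence
\[
\|\phi_1-\phi_0\|_{L^2(\rho)}^2\le 2\diam(\Omega)^2\,\|\phi_1-\phi_0\|_{L^1(\rho)} .
\]
Theorem~\ref{thm:quoted-potential-stability} then gives
\[
\int_{S_\rho}|\nabla\phi_1-\nabla\phi_0|^2\,\d\rho\le C\Bigl(\int_0^1 g\Bigr)^{1/3}.
\]

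\emph{Step 3: lower-bound the left side by $W_2(\mu_0,\mu_1)$.} By Lemma~\ref{mini} and \cite{zbMATH05342782}, $T_{i\#}\rho=\mu_i$ with $T_i=\exp_x(-\nabla\phi_i)$. The coupling $(T_0,T_1)_\#\rho$ of $\mu_0$ and $\mu_1$ then gives
\[
2W_2^2(\mu_0,\mu_1)\le\int\dist^2(T_0,T_1)\,\d\rho\le C_{\exp}^2\int|\nabla\phi_1-\nabla\phi_0|^2\,\d\rho ,
\]
by Lemma~\ref{lem:alex-exp-comparison}, with $|\nabla\phi_i|\le\diam(\Omega)$. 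Combining the three steps,
\[
W_2^2(\mu_0,\mu_1)\le C\Bigl(\int_0^1 g\Bigr)^{1/3},
\]
that is, $\int_0^1 g\ge C_2W_2^6(\mu_0,\mu_1)$, with all constants depending only on the stated parameters.

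\emph{Main obstacle.} The delicate point is Step 1's use of Lemma~\ref{3.7}. It needs absolute continuity of $s\mapsto\phi_s(x)$ and joint measurability of $(s,x)\mapsto v(T_s(x))$ so that Fubini applies. I would handle this via the uniform Lipschitz bound in $s$ together with the a.e.\ uniqueness from Lemma~\ref{mini}. The same ingredients make $g$ measurable, which also justifies the integrals used above.
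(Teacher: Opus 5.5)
Your proposal is correct and follows the paper's proof essentially step for step. You integrate Lemma~\ref{3.7} in $s$ and use $\E_\rho(\phi_i)=0$ to get $\|\phi_1-\phi_0\|_{L^1(\rho)}\le\int_0^1 g$, then use the $L^\infty$ bound to pass from $L^1$ to $L^2$ and apply Theorem~\ref{thm:quoted-potential-stability}, and close with Lemma~\ref{lem:alex-exp-comparison} and the coupling $(T_0,T_1)_\#\rho$. Your Lipschitz-in-$s$ justification for integrating the derivative supplies a detail the paper covers only by citing dominated convergence.
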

\begin{proof}
By Lemma~\ref{3.7} and dominated convergence,
\begin{equation}\label{ds}
\frac{\d}{\d s}\bigl(\phi_s(x)-\E_\rho(\phi_s)\bigr)
= -\bigl(v(T_s(x))-\E_\rho(v\circ T_s)\bigr)
\end{equation}
for $\mathcal L^1\otimes\rho$-a.e. $(s,x)$. Integrating \eqref{ds} in $s$ and using $\E_\rho(\phi_i)=0$ gives
\begin{equation}\label{in}
\int_0^1 \bigl(v(T_s(x))-\E_\rho(v\circ T_s)\bigr)\,\d s=\phi_0(x)-\phi_1(x).
\end{equation}
Therefore, by Fubini's theorem and the triangle inequality,
\begin{equation}\label{g1}
\int_0^1 g(s)\,\d s
\geq \int_{S_\rho}|\phi_1(x)-\phi_0(x)|\,\d\rho(x).
\end{equation}

Set $f:=\phi_1-\phi_0$. The functions $\phi_i$ are uniformly Lipschitz and normalized by $\E_\rho(\phi_i)=0$, hence $\|f\|_{L^\infty}$ is bounded by a constant depending only on $\diam(\Omega)$. The quoted potential stability theorem, Theorem~\ref{thm:quoted-potential-stability}, gives
\[
\int_{S_\rho}|\nabla\phi_1-\nabla\phi_0|^2\,\d\rho
\leq C\left(\int_{S_\rho}|f|^2\,\d\rho\right)^{1/3}.
\]
Using the $L^\infty$ bound on $f$, we obtain
\begin{equation}\label{eq:L1_grad}
\int_{S_\rho}|f|\,\d\rho
\geq c\left(\int_{S_\rho}|\nabla\phi_1-\nabla\phi_0|^2\,\d\rho\right)^3.
\end{equation}
For $\rho$-a.e. $x$, the gradients $-\nabla\phi_i(x)$ are well-defined~\cite{zbMATH05342782}. Since the endpoints $T_i(x)$ lie in $\Omega$, Lemma~\ref{lem:alex-exp-comparison} gives
\begin{equation}\label{exp}
\dist(T_1(x),T_0(x))\leq C|\nabla\phi_1(x)-\nabla\phi_0(x)|,
\end{equation}
with $C$ depending only on the curvature lower bound and $\diam(\Omega)$. Since $(T_0,T_1)_\#\rho$ is a coupling of $\mu_0$ and $\mu_1$, our normalization $c=\frac12\dist^2$ gives
\[
2 W_2^2(\mu_0,\mu_1)\leq \int_{S_\rho}\dist^2(T_0(x),T_1(x))\,\d\rho(x).
\]
Combining this inequality with \eqref{eq:L1_grad} and \eqref{exp} yields
\begin{equation}\label{g2}
\int_{S_\rho}|\phi_1-\phi_0|\,\d\rho
\geq C_2 W_2^6(\mu_0,\mu_1).
\end{equation}
The conclusion follows from \eqref{g1} and \eqref{g2}.
\end{proof}

\medskip

\begin{proof}[Proof of Theorem~\ref{main}]
Choose a Kantorovich potential $\psi_1$ from $\mu_1$ and set $\phi_i:=\psi_i^c$. Adding constants to $\psi_0$ and $\psi_1$ does not change the left-hand side of \eqref{eq:main_modulus}; it only adds constants to $v=\psi_1-\psi_0$, which disappear in the centered quantity defining $g$. We may therefore normalize $\E_\rho(\phi_i)=0$ for $i=0,1$.

By Proposition~\ref{limits},
\begin{equation}\label{eq:D_lower_g}
D_\rho(\mu_1,\mu_0)\geq \frac1{\kappa^2}\int_0^1 s g^2(s)\,\d s,
\qquad
g(s)=\|v\circ T_s-\E_\rho(v\circ T_s)\|_{L^1(\rho)}.
\end{equation}
The function $g$ is bounded by a constant $G$ depending only on $\diam(\Omega)$. Let $R:=W_2(\mu_0,\mu_1)$. If $R=0$, there is nothing to prove. Otherwise set
\[
\delta:=\min\left\{\frac12,\frac{C_2R^6}{2G}\right\}.
\]
Then $\int_0^\delta g(s)\,\d s\leq C_2R^6/2$. Proposition~\ref{gs} gives
\[
\int_\delta^1 g(s)\,\d s\geq \frac{C_2}{2}R^6.
\]
By Cauchy's inequality with weight $s$ and \eqref{eq:D_lower_g},
\[
\frac{C_2}{2}R^6
\leq \left(\int_\delta^1\frac{\d s}{s}\right)^{1/2}
     \left(\int_\delta^1 s g^2(s)\,\d s\right)^{1/2}
\leq \kappa |\log\delta|^{1/2}D_\rho(\mu_1,\mu_0)^{1/2}.
\]
Hence
\[
D_\rho(\mu_1,\mu_0)\geq c\frac{R^{12}}{|\log\delta|}.
\]
Let $D_W:=\diam(\mathcal{P}(\Omega),W_2)$. Since $0<R\leq D_W$ and $\delta$ is comparable to $(R/D_W)^6$ for small $R/D_W$ while bounded below by a positive constant for large $R/D_W$, there exist constants $A_1,A_2,A_3>0$, depending only on the stated structural parameters, such that
\[
D_\rho(\mu_1,\mu_0)\geq \frac{A_1R^{12}}{A_2+A_3\left|\log(R/D_W)\right|}.
\]
This is \eqref{eq:main_modulus}.
\end{proof}

\medskip

\begin{proof}[Proof of Theorem~\ref{barycenter}]
Let $(\phi_\rho,\psi_\rho)$ be the measurable family of Kantorovich potentials given by Proposition~\ref{balance}, so that $(\phi_\rho,\psi_\rho)$ is optimal for $(\rho,\mu_\mathbb{P})$ and
\[
\int_{\mathcal{P}(\Omega)}\psi_\rho(y)\,\d\mathbb{P}(\rho)=0,
\qquad y\in\Omega.
\]
For every $\rho\in S_\mathbb{P}$, Theorem~\ref{main}, applied with $\mu_0=\mu_\mathbb{P}$ and $\mu_1=\mu_\mathbb{Q}$, gives the uniform lower bound
\begin{equation}\label{eq:pointwise_deficit}
W_2^2(\mu_\mathbb{Q},\rho)-W_2^2(\mu_\mathbb{P},\rho)
-\int_\Omega \psi_\rho\,\d(\mu_\mathbb{Q}-\mu_\mathbb{P})
\geq \mathcal{D}\bigl(W_2(\mu_\mathbb{P},\mu_\mathbb{Q})\bigr).
\end{equation}
For $\rho\notin S_\mathbb{P}$, the same left-hand side is nonnegative by Lemma~\ref{3.3}. Integrating in $\rho$ and using the balance condition, we obtain
\begin{equation}\label{eq:variance_lower}
\Var_\mathbb{P}(\mu_\mathbb{Q})-\Var_\mathbb{P}(\mu_\mathbb{P})
\geq \alpha_\mathbb{P}\mathcal{D}\bigl(W_2(\mu_\mathbb{P},\mu_\mathbb{Q})\bigr).
\end{equation}
Indeed, the term involving $\psi_\rho$ vanishes by Fubini's theorem:
\[
\int_{\mathcal{P}(\Omega)}\int_\Omega \psi_\rho\,\d(\mu_\mathbb{Q}-\mu_\mathbb{P})\,\d\mathbb{P}(\rho)=0.
\]

On the other hand, since $\mu_\mathbb{Q}$ is a barycenter of $\mathbb{Q}$,   $\Var_\mathbb{Q}(\mu_\mathbb{Q}) \leq \Var_\mathbb{Q}(\mu_\mathbb{P})$,
\begin{align*}
\Var_\mathbb{P}(\mu_\mathbb{Q})-\Var_\mathbb{P}(\mu_\mathbb{P})
&=\bigl[\Var_\mathbb{P}(\mu_\mathbb{Q})-\Var_\mathbb{Q}(\mu_\mathbb{Q})\bigr]\\
&\quad +\bigl[\Var_\mathbb{Q}(\mu_\mathbb{Q})-\Var_\mathbb{Q}(\mu_\mathbb{P})\bigr]\\
&\quad +\bigl[\Var_\mathbb{Q}(\mu_\mathbb{P})-\Var_\mathbb{P}(\mu_\mathbb{P})\bigr]\\
&\leq \left|\int_{\mathcal{P}(\Omega)} W_2^2(\mu_\mathbb{Q},\rho)\,\d(\mathbb{P}-\mathbb{Q})(\rho)\right|\\
&\quad +\left|\int_{\mathcal{P}(\Omega)} W_2^2(\mu_\mathbb{P},\rho)\,\d(\mathbb{P}-\mathbb{Q})(\rho)\right|.
\end{align*}
The maps $\rho\mapsto W_2^2(\mu_\mathbb{Q},\rho)$ and $\rho\mapsto W_2^2(\mu_\mathbb{P},\rho)$ are $L_\Omega$-Lipschitz on $(\mathcal{P}(\Omega),W_2)$, with $L_\Omega$ depending only on $\diam(\Omega)$. Hence Kantorovich duality for $W_1$ gives
\begin{equation}\label{eq:D_to_W1}
B_1\mathcal{D}\bigl(W_2(\mu_\mathbb{P},\mu_\mathbb{Q})\bigr)
\leq W_1(\mathbb{P},\mathbb{Q})
\end{equation}
for some $B_1>0$ depending only on $\alpha_\mathbb{P}$ and $\diam(\Omega)$.

Finally, $W_2(\mu_\mathbb{P},\mu_\mathbb{Q})$ is bounded above by $\diam(\mathcal{P}(\Omega),W_2)$. Therefore, for every $\sigma>0$ there exists $c_\sigma>0$ such that
\[
\mathcal{D}(t)\geq c_\sigma t^{12+\sigma},\qquad 0\leq t\leq\diam(\mathcal{P}(\Omega),W_2).
\]
Combining this with \eqref{eq:D_to_W1} proves \eqref{eq:stability_estimate}.
\end{proof}

\section{Statistical Applications}\label{sec:statistical-applications}

The results in this section are consequences of the deterministic stability estimate in Theorem~\ref{barycenter}. We first treat empirical barycenters under a polynomial covering-number condition, then record a separate geometric entropy bound for the full Wasserstein space over a compact Alexandrov space, and finally prove almost-sure consistency.

Throughout this section, random variables are defined on a probability space $(\Theta,\mathcal F,\mathsf P)$. The symbol $\mathsf P$ denotes this underlying probability measure, whereas $\mathbb P$ denotes a probability law on the Wasserstein space $\mathcal P(\Omega)$. Let $\rho_1,\dots,\rho_N$ be i.i.d. random variables with law $\mathbb{P}$, and set
\[
\mathbb{P}_N:=\frac1N\sum_{i=1}^N\delta_{\rho_i}\in\mathcal{P}(\mathcal{P}(\Omega)).
\]
The empirical barycenter of $\mathbb{P}_N$ is a natural estimator of $\mu_\mathbb{P}$.

For a metric space $(Y,d_Y)$, write $\mathcal N(Y,d_Y,r)$ for the minimal number of open $d_Y$-balls of radius $r$ needed to cover $Y$. If the metric is clear, we write simply $\mathcal N(Y,r)$.

\subsection{Sample complexity under polynomial covering numbers}
Since $\mathcal{P}(\Omega)$ is typically infinite-dimensional, non-asymptotic rates for $W_1(\mathbb{P}_N,\mathbb{P})$ require additional complexity assumptions. We first impose a polynomial covering-number condition on the support of $\mathbb P$. Let
\[
\mathsf Z:=(\mathcal{P}(\Omega),W_2),\qquad \Delta:=\diam(\mathsf Z).
\]

\begin{proposition}[Sample complexity under polynomial covering numbers]\label{prop:sample}
Assume that $\mathbb{P}\in\mathcal{P}(\mathcal{P}(\Omega))$ satisfies Assumption~\ref{assump}. Let $K:=\operatorname{spt}\mathbb{P}\subset\mathsf Z$, and suppose that for some constants $C_{\rm ent}\geq1$ and $d_{\rm ent}\geq0$,
\begin{equation}\label{eq:entropy_assumption}
\mathcal N(K,W_2,r)\leq C_{\rm ent}r^{-d_{\rm ent}},\qquad 0<r\leq1.
\end{equation}
Then for every $\sigma>0$, $\varepsilon\in(0,1/2)$ and $\delta\in(0,1)$, there exists $C>0$, depending only on $\sigma$, the structural parameters in Assumption~\ref{assump}, $\Delta$, $C_{\rm ent}$ and $d_{\rm ent}$, such that if
\begin{equation}\label{eq:sample_complexity}
N\geq C\left(\varepsilon^{-(12+\sigma)(d_{\rm ent}+2)}+
\varepsilon^{-2(12+\sigma)}\log\!\frac{1}{\delta}\right),
\end{equation}
then any empirical Wasserstein barycenter $\mu_{\mathbb{P}_N}$ satisfies
\[
\mathsf P\!\left(W_2(\mu_{\mathbb{P}_N},\mu_\mathbb{P})\leq\varepsilon\right)\geq1-\delta.
\]
\end{proposition}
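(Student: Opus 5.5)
The plan is to reduce the statement to a concentration estimate for $W_1(\mathbb{P}_N,\mathbb{P})$ through Theorem~\ref{barycenter}. Applying that theorem with $\mathbb{Q}=\mathbb{P}_N$ (which is legitimate for every realization, since the theorem holds for arbitrary $\mathbb{Q}$ and every barycenter of $\mathbb{Q}$) gives
\[
C_\sigma W_2^{12+\sigma}(\mu_{\mathbb{P}_N},\mu_\mathbb{P})\leq W_1(\mathbb{P}_N,\mathbb{P}).
\]
Hence $\{W_2(\mu_{\mathbb{P}_N},\mu_\mathbb{P})>\varepsilon\}\subseteq\{W_1(\mathbb{P}_N,\mathbb{P})>\tau\}$ with $\tau:=C_\sigma\varepsilon^{12+\sigma}$. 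It therefore suffices to show that $\mathsf P(W_1(\mathbb{P}_N,\mathbb{P})>\tau)\le\delta$ once $N\gtrsim \tau^{-(d_{\rm ent}+2)}+\tau^{-2}\log(1/\delta)$. Inserting the value of $\tau$ then gives exactly \eqref{eq:sample_complexity}, with constants absorbed.

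The first step is the mean bound $\E\, W_1(\mathbb{P}_N,\mathbb{P})\lesssim \tau/2$, using the standard partition/quantization argument on the compact set $K$, which has diameter at most $\Delta$. Fix a scale $r\in(0,1]$ and cover $K$ by $\mathcal N(K,r)\le C_{\rm ent}r^{-d_{\rm ent}}$ balls, disjointified into cells $B_1,\dots,B_m$. Coupling mass within cells at cost $\le 2r$ and moving the discrepancy $\sum_j|\mathbb{P}_N(B_j)-\mathbb{P}(B_j)|$ at cost $\le\Delta$ yields
\[
\E W_1(\mathbb{P}_N,\mathbb{P})\le 2r+\Delta\sum_j\E|\mathbb{P}_N(B_j)-\mathbb{P}(B_j)|\le 2r+\Delta\sqrt{m/N}.
\]
The last inequality uses Cauchy--Schwarz together with the binomial variance. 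Choosing $r=\tau/8$ gives $\E W_1\le \tau/4+\Delta\sqrt{C_{\rm ent}(8/\tau)^{d_{\rm ent}}/N}$, which is at most $\tau/2$ as soon as $N\gtrsim \tau^{-(d_{\rm ent}+2)}$. This step is where the first term of \eqref{eq:sample_complexity} comes from. A multiscale (Dudley-type) argument would improve the exponent, but it is unnecessary for the stated bound.

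The second step is concentration. The map $(\rho_1,\dots,\rho_N)\mapsto W_1(\mathbb{P}_N,\mathbb{P})$ has bounded differences: changing one sample changes $\mathbb{P}_N$ by $\frac1N(\delta_\rho-\delta_{\rho'})$, so $W_1$ moves by at most $\Delta/N$. McDiarmid's inequality then gives
\[
\mathsf P\bigl(W_1(\mathbb{P}_N,\mathbb{P})\ge \E W_1+\tau/2\bigr)\le \exp\bigl(-N\tau^2/(2\Delta^2)\bigr),
\]
which is $\le\delta$ once $N\ge 2\Delta^2\tau^{-2}\log(1/\delta)$. This gives the second term of \eqref{eq:sample_complexity}.

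The remaining points are routine bookkeeping, and the main technical care is here rather than in any deep estimate. The entropy hypothesis \eqref{eq:entropy_assumption} is used only at the scale $r=\tau/8\le1$. That condition holds after possibly shrinking $C_\sigma$, because $\varepsilon<1/2$. One also needs measurability of $W_2(\mu_{\mathbb{P}_N},\mu_\mathbb{P})$ for an arbitrary selection of empirical barycenters. This is avoided by stating the result as an inclusion of events: the bound holds on the measurable event $\{W_1(\mathbb{P}_N,\mathbb{P})\le\tau\}$ for every choice of barycenter, so the conclusion should be read with inner probability if needed.
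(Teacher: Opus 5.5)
Your proposal is correct and follows the paper's proof essentially step for step. You apply Theorem~\ref{barycenter} with $\mathbb{Q}=\mathbb{P}_N$, quantize $K$ on an $r$-net using the multinomial $\ell^1$ bound $\sqrt{M/N}$, and finish with McDiarmid's inequality using bounded differences $\Delta/N$. The differences are cosmetic: you derive the $\ell^1$ bound by Cauchy--Schwarz where the paper cites Devroye--Gy\"orfi, and you fix $r=\tau/8$ directly instead of first optimizing in $r$ to get $\E W_1(\mathbb{P}_N,\mathbb{P})\lesssim N^{-1/(d_{\rm ent}+2)}$, which gives the same threshold on $N$.
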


\begin{proof}
Set $q_\sigma:=12+\sigma$. By Theorem~\ref{barycenter}, applied with the population law $\mathbb{P}$ as the reference distribution and $\mathbb{P}_N$ as the perturbation, there exists $c_*>0$ depending only on the stated structural constants such that
\begin{equation}\label{eq:empirical_stability}
c_* W_2^{q_\sigma}(\mu_{\mathbb{P}_N},\mu_\mathbb{P})
\leq W_1(\mathbb{P}_N,\mathbb{P})
\end{equation}
for every choice of empirical barycenter $\mu_{\mathbb{P}_N}$. Thus it remains to control $W_1(\mathbb{P}_N,\mathbb{P})$.

For any $r\in(0,1]$, choose an $r$-net $\{z_1,\dots,z_M\}$ of $K$ with $M=\mathcal N(K,W_2,r)$, and let $\pi:K\to\{z_1,\dots,z_M\}$ be a Borel nearest-point projection. Since both $\mathbb{P}$ and $\mathbb{P}_N$ are supported on $K$,
\[
W_1(\mathbb{P}_N,\mathbb{P})
\leq 2r+W_1(\pi_\#\mathbb{P}_N,\pi_\#\mathbb{P})
\leq 2r+\Delta\,\|\pi_\#\mathbb{P}_N-\pi_\#\mathbb{P}\|_{\rm TV}.
\]
For the multinomial empirical measure on $M$ points, the standard $l^1$ risk bound  (see, e.g., Devroye--Gy\"orfi~\cite[Chapter~3]{DevroyeGyorfi1985}) gives 
\[
\mathbb E\|\pi_\#\mathbb{P}_N-\pi_\#\mathbb{P}\|_{\rm TV}
\leq \frac12\sqrt{\frac{M}{N}}.
\]
Together with \eqref{eq:entropy_assumption}, this gives
\[
\mathbb E W_1(\mathbb{P}_N,\mathbb{P})
\leq 2r+\frac{\Delta}{2}\sqrt{\frac{C_{\rm ent}r^{-d_{\rm ent}}}{N}}.
\]
Optimizing in $r$ yields
\begin{equation}\label{eq:expected_empirical_w1}
\mathbb E W_1(\mathbb{P}_N,\mathbb{P})\leq C_E N^{-1/(d_{\rm ent}+2)},
\end{equation}
with $C_E$ depending only on $\Delta,C_{\rm ent}$ and $d_{\rm ent}$.

The map $(\rho_1,\dots,\rho_N)\mapsto W_1(\mathbb{P}_N,\mathbb{P})$ has bounded differences: changing one sample changes the empirical measure by at most $\Delta/N$ in $W_1$. McDiarmid's inequality~\cite{zbMATH01036755} and \eqref{eq:expected_empirical_w1} imply
\begin{equation}\label{eq:empirical_concentration}
\mathsf P\!\left(W_1(\mathbb{P}_N,\mathbb{P})
\leq C_E N^{-1/(d_{\rm ent}+2)}+\Delta\sqrt{\frac{\log(1/\delta)}{2N}}\right)
\geq1-\delta.
\end{equation}
If \eqref{eq:sample_complexity} holds with $C$ sufficiently large, the right-hand side in \eqref{eq:empirical_concentration} is at most $c_*\varepsilon^{q_\sigma}$. Combining this with \eqref{eq:empirical_stability} gives the claim.
\end{proof}

\subsection{A geometric entropy bound in the Alexandrov setting}

The polynomial covering-number condition in Proposition~\ref{prop:sample} is strong when $K$ is a subset of the full Wasserstein space. For $K=\mathcal{P}(\Omega)$ one should instead use the following exponential metric-entropy estimate, which comes directly from the geometry of the underlying Alexandrov space.

\begin{lemma}[Metric entropy of Wasserstein spaces over Alexandrov spaces]
\label{lem:alexandrov-entropy}
Let $(\Omega,\dist)$ be a compact $n$-dimensional Alexandrov space with curvature bounded below by $k$ and diameter bounded above by $D$. Then there exists a constant $C=C(n,k,D)>0$ such that, for every $0<\varepsilon<1/2$,
\[
    \log \mathcal N\bigl(\mathcal P(\Omega),W_2,\varepsilon\bigr)
    \le
    C\,\varepsilon^{-n}\log\frac{C}{\varepsilon}.
\]
Consequently, for every $\eta>0$, after increasing the constant we have
\[
    \log \mathcal N\bigl(\mathcal P(\Omega),W_2,\varepsilon\bigr)
    \le
    C_\eta\,\varepsilon^{-(n+\eta)}.
\]
This is an exponential entropy statement for the full Wasserstein space and is distinct from the polynomial covering-number assumption \eqref{eq:entropy_assumption}.
\end{lemma}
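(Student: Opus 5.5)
We will prove the lemma with the standard quantization argument: approximate an arbitrary measure by one supported on a finite net, and then discretize the weights. Fix $0<\varepsilon<1/2$ and set $r:=\varepsilon/2$.

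\emph{Step 1 (spatial discretization).} By Bishop--Gromov volume comparison on Alexandrov spaces with curvature $\geq k$, we have $\mathcal H^n(B_r(x))\geq c(n,k,D)\,r^n$ for every $x\in\Omega$ and $r\le D$, while $\mathcal H^n(\Omega)\le V_k^n(D)$. A maximal $r$-separated set $\{x_1,\dots,x_L\}\subset\Omega$ therefore satisfies
\[
L\le C(n,k,D)\,r^{-n}.
\]
Partition $\Omega$ into Borel Voronoi cells $V_j$ with $\diam$-control $V_j\subset B_r(x_j)$. For $\mu\in\mathcal P(\Omega)$, set $\tilde\mu:=\sum_j\mu(V_j)\delta_{x_j}$. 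The coupling that sends each $V_j$ to $x_j$ shows $W_2(\mu,\tilde\mu)\le r$; with the normalization $c=\frac12\dist^2$ the bound is even better.

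\emph{Step 2 (weight discretization).} Let $\tilde\mu$ have weights $w$ in the simplex $\Delta_{L}$, and let $\tilde\mu'$ have weights $w'$ on the same points. Transporting the mass excess crudely at cost at most $D^2$ gives
\[
W_2^2(\tilde\mu,\tilde\mu')\le \tfrac12 D^2\,\|w-w'\|_{\ell^1}.
\]
Choose an $\ell^1$-net of $\Delta_L$ of mesh $\tau:=\varepsilon^2/(4D^2)$; for instance, take the weights rounded down to multiples of $\tau/L$ and put the remainder on $x_1$. This net has cardinality at most $(1+2/\tau)^L$, since the simplex's $\ell^1$ covering number is bounded by the volumetric estimate $(3/\tau)^{L}$. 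Consequently every $\mu$ lies within $r+\varepsilon/2\cdot$(constant) $\le\varepsilon$ of one of these measures, after adjusting the constants in $r$ and $\tau$ (take $r=\varepsilon/2$ and ensure the second error is $\le\varepsilon/2$). The centers may be taken from the measures themselves at the cost of a factor $2$ in the radius, which is absorbed into $C$.

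\emph{Step 3 (counting).} The resulting bound is
\[
\log\mathcal N(\mathcal P(\Omega),W_2,\varepsilon)\le L\log(3/\tau)\le C r^{-n}\log\frac{12D^2}{\varepsilon^2}\le C\varepsilon^{-n}\log\frac{C}{\varepsilon}.
\]
The second claim follows because $\log(C/\varepsilon)\le C_\eta\varepsilon^{-\eta}$ on $(0,1/2)$.

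The only geometric input is the uniform packing bound in Step 1, which is where we expect the real work to lie. It requires Bishop--Gromov for $\mathcal H^n$ on Alexandrov spaces, which is standard (Burago--Gromov--Perelman), together with a uniform upper bound on $\mathcal H^n(\Omega)$ depending only on $n,k,D$. The remaining steps are routine bookkeeping of constants.
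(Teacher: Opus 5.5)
Your proposal is correct and is essentially the paper's argument. Both use an $r$-net of $\Omega$ with $O(r^{-n})$ points, project $\mu$ onto it at $W_2$-cost at most $r$, bound $W_2^2\lesssim D^2\|p-q\|_{\ell^1}$ on the simplex, and take an $\ell^1$-net of mesh of order $\varepsilon^2/D^2$ with at most $(C/\tau)^L$ elements; the paper simply quotes the uniform covering bound $\mathcal N(\Omega,r)\le Cr^{-n}$ where you derive it.

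There is one slip in Step~1. The absolute bound $\mathcal H^n(B_r(x))\ge c(n,k,D)\,r^n$ fails under collapse, for example on thin flat tori $S^1_\epsilon\times S^1$. You do not need it, though. The relative Bishop--Gromov comparison $\mathcal H^n(B_{r/2}(x_j))\ge \mathcal H^n(\Omega)\,V_k^n(r/2)/V_k^n(D)$, applied to the disjoint balls $B_{r/2}(x_j)$, gives $L\le V_k^n(D)/V_k^n(r/2)\le C(n,k,D)\,r^{-n}$ directly, so no separate bound on $\mathcal H^n(\Omega)$ is needed.
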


\begin{proof}
By the uniform covering theorem for compact Alexandrov spaces with fixed dimension, lower curvature bound and diameter bound (see, for instance, \cite[Chapter~10]{BuragoBuragoIvanov2001}), there exists $C=C(n,k,D)>0$ such that
\[
    \mathcal N(\Omega,\dist,r)\le C r^{-n},
    \qquad 0<r<1 .
\]
Let $\{x_1,\ldots,x_m\}$ be an $r$-net of $\Omega$ with $m\le C r^{-n}$, and choose a measurable nearest-point projection
\[
    \pi_r:\Omega\to \{x_1,\ldots,x_m\}.
\]
For every $\mu\in\mathcal P(\Omega)$, set $\mu^r := (\pi_r)_\#\mu$. Then $W_2(\mu,\mu^r)\le r$.

It remains to cover the set of probability measures supported on $\{x_1,\ldots,x_m\}$. We identify such measures with the probability simplex
\[
    \Delta_m
    :=
    \left\{
        p=(p_1,\ldots,p_m)\in [0,1]^m:
        \sum_{i=1}^m p_i=1
    \right\}.
\]
If $p,q\in \Delta_m$, then, since $\operatorname{diam}(\Omega)\le D$,
\[
    W_2^2(p,q)
    \le
    D^2 \|p-q\|_{\mathrm{TV}}
    =
    \frac{D^2}{2}\|p-q\|_1 .
\]
Therefore an $\ell^1$-net of $\Delta_m$ with mesh $\delta := \varepsilon^2/(2D^2)$ gives a $W_2$-net with radius at most $\varepsilon/2$.

The simplex $\Delta_m$ admits an $\ell^1$-net of mesh $\delta$ and cardinality at most $(C/\delta)^m$. Hence
\[
    \log \mathcal N(\Delta_m,W_2,\varepsilon/2)
    \le
    m \log\frac{C}{\delta}
    \le
    C r^{-n}\log\frac{C}{\varepsilon}.
\]
Taking $r=\varepsilon/2$ and using the triangle inequality gives
\[
    \log \mathcal N\bigl(\mathcal P(\Omega),W_2,\varepsilon\bigr)
    \le
    C\varepsilon^{-n}\log\frac{C}{\varepsilon}.
\]
Finally, since $\log(C/\varepsilon)\le C_\eta\varepsilon^{-\eta}$ for every $\eta>0$ and $0<\varepsilon<1/2$, the second bound follows.
\end{proof}

\begin{corollary}[Empirical barycenters over Alexandrov spaces]
\label{cor:empirical-alexandrov}
Assume that $(\Omega,\dist)$ is a compact $n$-dimensional Alexandrov space with curvature bounded below by $k$ and diameter bounded above by $D$. Let $\mathbb P\in \mathcal P(\mathcal P(\Omega))$, and let
\[
    \mathbb P_N := \frac1N\sum_{i=1}^N \delta_{\rho_i},
\]
where $\rho_1,\ldots,\rho_N$ are independent samples with law $\mathbb P$. Then there exists a constant $C=C(n,k,D)>0$ such that, for all $N\ge 3$,
\[
    \mathbb E W_1(\mathbb P_N,\mathbb P)
    \le
    C\left(\frac{\log\log N}{\log N}\right)^{1/n}.
\]
Consequently, if $\mathbb P$ satisfies Assumption~\ref{assump} and if $\mu_{\mathbb P_N}$ and $\mu_{\mathbb P}$ denote the corresponding Wasserstein barycenters, then for every $\sigma>0$ there exists $C_\sigma>0$ such that
\[
    \mathbb E W_2(\mu_{\mathbb P_N},\mu_{\mathbb P})
    \le
    C_\sigma
    \left(\frac{\log\log N}{\log N}\right)^{
        \frac{1}{n(12+\sigma)}
    } .
\]
\end{corollary}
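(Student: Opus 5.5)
The plan is to follow the same two-step structure as in Proposition~\ref{prop:sample}: first bound the expected empirical error $\mathbb E W_1(\mathbb P_N,\mathbb P)$ on the second level, now using the exponential entropy from Lemma~\ref{lem:alexandrov-entropy}; then pass to barycenters through Theorem~\ref{barycenter} and Jensen's inequality.

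\textbf{Discretization.} Fix $r\in(0,1/2)$ and let $\{z_1,\dots,z_M\}$ be an $r$-net of $\mathsf Z=(\mathcal P(\Omega),W_2)$ with $\log M\le C r^{-n}\log(C/r)$, as given by Lemma~\ref{lem:alexandrov-entropy}. Let $\pi$ be a Borel nearest-point projection onto this net. Pushing both $\mathbb P_N$ and $\mathbb P$ forward by $\pi$ costs at most $r$ each, so
\[
W_1(\mathbb P_N,\mathbb P)\le 2r+\Delta\,\|\pi_\#\mathbb P_N-\pi_\#\mathbb P\|_{\rm TV}.
\]
Note that $\pi_\#\mathbb P_N$ is exactly the empirical measure of $\pi_\#\mathbb P$. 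The multinomial $\ell^1$ bound used in Proposition~\ref{prop:sample} then gives
\[
\mathbb E\, W_1(\mathbb P_N,\mathbb P)\le 2r+\frac{\Delta}{2}\sqrt{M/N}.
\]
The only delicate point is choosing $r$. We need $M\le N^{1/2}$, say, so that the second term is at most $\tfrac{\Delta}{2}N^{-1/4}$. It suffices to have $C r^{-n}\log(C/r)\le \tfrac12\log N$. I would take
\[
r:=A\left(\frac{\log\log N}{\log N}\right)^{1/n}
\]
with $A$ large depending on $n,k,D$. Then $\log(C/r)\lesssim \tfrac1n\log\log N$, so
\[
C r^{-n}\log(C/r)\lesssim A^{-n}\log N,
\]
which is at most $\tfrac12\log N$ once $A$ is large enough. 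This is the main bookkeeping step. For small $N$, where this $r$ would exceed $1/2$, the estimate is trivial: $W_1\le\Delta$, and the constant absorbs it. Since $N^{-1/4}$ is dominated by $(\log\log N/\log N)^{1/n}$, we obtain the first claim. In fact any power $N^{-\theta}$ with $\theta<1/2$ would work equally well.

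\textbf{Passage to barycenters.} For the second claim, apply Theorem~\ref{barycenter} with $\mathbb Q=\mathbb P_N$ (every empirical barycenter qualifies):
\[
W_2(\mu_{\mathbb P_N},\mu_{\mathbb P})\le C^{-1/(12+\sigma)}\,W_1(\mathbb P_N,\mathbb P)^{1/(12+\sigma)}.
\]
Take expectations and use Jensen's inequality for the concave map $u\mapsto u^{1/(12+\sigma)}$ to get
\[
\mathbb E W_2(\mu_{\mathbb P_N},\mu_{\mathbb P})\le C'\,\bigl(\mathbb E W_1(\mathbb P_N,\mathbb P)\bigr)^{1/(12+\sigma)}.
\]
Inserting the first bound gives the exponent $\tfrac{1}{n(12+\sigma)}$.

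Two small checks remain. First, measurability of $\mu_{\mathbb P_N}$: either use a measurable selection of barycenters, or note that the pointwise bound holds for every selection, so one can work with an outer expectation. Second, the case $N\ge3$ is needed so that $\log\log N>0$, which keeps the rate well defined.
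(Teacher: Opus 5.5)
Your proposal is correct and follows essentially the same route as the paper. Both take an $r$-net of $(\mathcal P(\Omega),W_2)$ from Lemma~\ref{lem:alexandrov-entropy}, pay $2r$ for the projection, use the finite-state $\sqrt{M/N}$ bound, choose $r=A(\log\log N/\log N)^{1/n}$ so that $\log M\le\frac12\log N$, and then apply Theorem~\ref{barycenter} with Jensen's inequality; your explicit handling of small $N$ and of measurability of the empirical barycenter fills in details the paper leaves implicit.
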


\begin{proof}
Let $\mathsf Z := (\mathcal P(\Omega),W_2)$. By Lemma~\ref{lem:alexandrov-entropy},
\[
    \log \mathcal N(\mathsf Z,\varepsilon)
    \le
    C\varepsilon^{-n}\log\frac{C}{\varepsilon}.
\]
Let $\mathsf Z_\varepsilon$ be an $\varepsilon$-net of $\mathsf Z$, and let $\Pi_\varepsilon:\mathsf Z\to \mathsf Z_\varepsilon$ be a measurable nearest-point projection. Then
\[
    W_1(\mathbb P_N,\mathbb P)
    \le
    2\varepsilon
    +
    W_1\bigl((\Pi_\varepsilon)_\#\mathbb P_N,
             (\Pi_\varepsilon)_\#\mathbb P\bigr).
\]
The two measures in the second term are probability measures on the finite set $\mathsf Z_\varepsilon$. Since $\mathsf Z$ has diameter bounded in terms of $D$, the standard finite-state estimate gives
\[
    \mathbb E
    W_1\bigl((\Pi_\varepsilon)_\#\mathbb P_N,
             (\Pi_\varepsilon)_\#\mathbb P\bigr)
    \le
    C_D
    \sqrt{
        \frac{\mathcal N(\mathsf Z,\varepsilon)}{N}
    } .
\]
Choose
\[
    \varepsilon_N
    :=
    A\left(\frac{\log\log N}{\log N}\right)^{1/n},
\]
where $A=A(n,k,D)>0$ is sufficiently large. Then, for all $N\ge 3$,
\[
    \log \mathcal N(\mathsf Z,\varepsilon_N)
    \le
    \frac12 \log N .
\]
Therefore
\[
    \mathbb E W_1(\mathbb P_N,\mathbb P)
    \le
    2\varepsilon_N + C_D N^{-1/4}
    \le
    C\left(\frac{\log\log N}{\log N}\right)^{1/n}.
\]

The barycenter estimate follows from Theorem~\ref{barycenter}, which gives, for every $\sigma>0$,
\[
    W_2(\mu_{\mathbb P_N},\mu_{\mathbb P})
    \le
    C_\sigma
    W_1(\mathbb P_N,\mathbb P)^{1/(12+\sigma)}.
\]
Taking expectations and using Jensen's inequality, since $t\mapsto t^{1/(12+\sigma)}$ is concave, yields the claim.
\end{proof}

\begin{corollary}[Consistency of empirical barycenters]\label{cor:consistency}
Assume that $\mathbb{P}\in\mathcal{P}(\mathcal{P}(\Omega))$ satisfies Assumption~\ref{assump}. Let $(\rho_i)_{i\geq1}$ be i.i.d. random variables with law $\mathbb{P}$ on $(\Theta,\mathcal F,\mathsf P)$. For every $N$ and every $\omega\in\Theta$, let $\mu_{\mathbb{P}_N(\omega)}$ be any Wasserstein barycenter of $\mathbb{P}_N(\omega)$. Then
\[
W_2\bigl(\mu_{\mathbb{P}_N(\omega)},\mu_\mathbb{P}\bigr)\longrightarrow0
\qquad\text{for $\mathsf P$-a.e. }\omega.
\]
\end{corollary}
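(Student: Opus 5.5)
The plan is to reduce almost-sure consistency of the barycenters to almost-sure convergence $W_1(\mathbb{P}_N(\omega),\mathbb{P})\to0$, and then apply Theorem~\ref{barycenter} pathwise. Theorem~\ref{barycenter}, used with reference law $\mathbb{P}$ and perturbation $\mathbb{Q}=\mathbb{P}_N(\omega)$ (and, say, $\sigma=1$), gives for every $\omega$, every $N$ and every choice of barycenter $\mu_{\mathbb{P}_N(\omega)}$
\[
W_2\bigl(\mu_{\mathbb{P}_N(\omega)},\mu_\mathbb{P}\bigr)\leq C^{-1/13}\,W_1\bigl(\mathbb{P}_N(\omega),\mathbb{P}\bigr)^{1/13}.
\]
The constant does not depend on $\omega$ or on the selection of barycenter. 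The arbitrary, possibly non-measurable choice of $\mu_{\mathbb{P}_N(\omega)}$ therefore causes no difficulty. We only need a single full-measure event on which the right-hand side tends to zero.

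For the $W_1$ convergence, first note that $(\mathsf Z,W_2)=(\mathcal{P}(\Omega),W_2)$ is a compact metric space, since $\Omega$ is compact. In particular it is Polish and separable, with diameter $\Delta<\infty$. By Varadarajan's theorem (the almost-sure version of the Glivenko--Cantelli theorem for empirical measures on separable metric spaces), there is a $\mathsf P$-null set outside of which $\mathbb{P}_N(\omega)\to\mathbb{P}$ weakly. Concretely, take a countable convergence-determining family $\{f_j\}$ of bounded Lipschitz functions on $\mathsf Z$, which exists by separability. Apply the strong law of large numbers to each $f_j(\rho_i)$, and intersect the countably many full-measure events. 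On compact $\mathsf Z$, weak convergence is equivalent to $W_1$ convergence, because all measures are supported in a set of bounded diameter, so no uniform integrability issue arises. Hence $W_1(\mathbb{P}_N(\omega),\mathbb{P})\to0$ for $\mathsf P$-a.e. $\omega$.

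An alternative route is via Lemma~\ref{lem:alexandrov-entropy} and Corollary~\ref{cor:empirical-alexandrov}. They give $\mathbb E W_1(\mathbb{P}_N,\mathbb{P})\to0$, and McDiarmid's inequality as in the proof of Proposition~\ref{prop:sample} gives $\mathsf P(W_1(\mathbb{P}_N,\mathbb{P})-\mathbb E W_1>t)\leq e^{-2Nt^2/\Delta^2}$. This is summable in $N$, so Borel--Cantelli again yields almost-sure convergence. That route needs $\Omega$ itself to be an Alexandrov space, however, so I would use the Varadarajan argument, which requires only compactness.

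The main (mild) obstacle is measurability. The map $\omega\mapsto\mathbb{P}_N(\omega)$ must be a random element of $\mathcal{P}(\mathsf Z)$, which holds because $\rho_i$ are random elements of the Polish space $\mathsf Z$. The null set must also be chosen independently of the barycenter selection. Both are handled because the exceptional set is defined purely through $W_1(\mathbb{P}_N(\omega),\mathbb{P})$, and the stability inequality is deterministic and holds for every $\omega$ simultaneously.
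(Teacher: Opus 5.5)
Your proposal is correct and follows essentially the same route as the paper. Varadarajan's theorem on the compact metric space $(\mathcal P(\Omega),W_2)$ gives almost-sure weak convergence, hence $W_1$-convergence, of $\mathbb P_N(\omega)$ to $\mathbb P$, and Theorem~\ref{barycenter} applied pathwise with $\mathbb P$ as reference then finishes; your remarks on measurability, and on the null set not depending on the barycenter selection, are accurate details the paper leaves implicit.
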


\begin{proof}
Since $\mathsf Z=(\mathcal{P}(\Omega),W_2)$ is compact and metric, Varadarajan's theorem~\cite{Varadarajan58} gives weak convergence $\mathbb{P}_N(\omega)\rightharpoonup\mathbb{P}$ almost surely. On a compact metric space, weak convergence is equivalent to convergence in $W_1$; hence
\[
W_1(\mathbb{P}_N(\omega),\mathbb{P})\to0
\qquad\text{for $\mathsf P$-a.e. }\omega.
\]
The conclusion follows by applying Theorem~\ref{barycenter} with $\mathbb{P}$ as the reference distribution and $\mathbb{P}_N(\omega)$ as the perturbation.
\end{proof}

\begin{appendices}
\section{Zero-order balance condition}\label{A}

The following global zero-order balance condition was first established in \cite{zbMATH05956494} and then generalized in \cite{zbMATH07819904}.

\begin{proposition}[Global zero-order balance]\label{balance}
There exists a $\mathbb{P}$-measurable map
\[
\mathcal{P}(\Omega)\ni\rho\mapsto \psi_\rho\in C(\Omega)
\]
such that, setting $\phi_\rho=\psi_\rho^c$, the following hold:
\begin{enumerate}[{\rm (i)}]
\item for $\mathbb{P}$-a.e.\ $\rho\in\mathcal{P}(\Omega)$, $(\phi_\rho,\psi_\rho)$ is a pair of Kantorovich potentials for $(\rho,\mu_{\mathbb{P}})$;
\item the map satisfies the global zero-order balance condition
\[
\int_{\mathcal{P}(\Omega)}\psi_\rho(y)\,\d\mathbb{P}(\rho)=0,\qquad \forall y\in\Omega.
\]
\end{enumerate}
\end{proposition}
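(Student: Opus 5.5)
The plan is to follow the Agueh--Carlier / Carlier--Delalande--M\'erigot strategy: first-order optimality of $\mu_\mathbb{P}$ for $\Var_\mathbb{P}$, followed by a measurable selection that makes the integrated potential exactly zero rather than merely constant. Everything takes place on the compact set $\Omega$, so all potentials lie in a fixed compact subset of $C(\Omega)$.

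\textbf{Step 1: measurable selection of normalized potentials.} For each $\rho$, the set of $c$-concave Kantorovich potentials $\psi$ on the $\mu_\mathbb{P}$-side is nonempty. After fixing the normalization $\psi(y_0)=0$ at a base point $y_0$, these potentials are uniformly Lipschitz with constant $\diam(\Omega)$ and uniformly bounded. Hence they form a compact subset of $C(\Omega)$. The correspondence from $\rho$ to this set has closed graph, by stability of optimal potentials under weak convergence. A Kuratowski--Ryll-Nardzewski (or Aumann) measurable selection theorem then yields a Borel map $\rho\mapsto\tilde\psi_\rho$. The map $\rho\mapsto\int\tilde\psi_\rho\,\d\mu$ is measurable, and everything is bounded, so the integral $\bar\psi(y):=\int\tilde\psi_\rho(y)\,\d\mathbb{P}(\rho)$ is well defined and Lipschitz.

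\textbf{Step 2: first-order optimality and the main obstacle.} By Lemma~\ref{3.3}, for every $\mu\in\mathcal P(\Omega)$,
\[
W_2^2(\mu,\rho)\ge W_2^2(\mu_\mathbb{P},\rho)+\int\tilde\psi_\rho\,\d(\mu-\mu_\mathbb{P}).
\]
This only gives a lower bound, whereas the balance condition needs an upper-type inequality: the one-sided directional derivative of $\Var_\mathbb{P}$ at $\mu_\mathbb{P}$ in direction $\mu-\mu_\mathbb{P}$ must be representable through the potentials. Along the linear interpolation $\mu_\varepsilon=(1-\varepsilon)\mu_\mathbb{P}+\varepsilon\mu$, the function $\varepsilon\mapsto W_2^2(\mu_\varepsilon,\rho)$ is convex. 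Its right derivative at $0$ equals $\max_{\psi}\int\psi\,\d(\mu-\mu_\mathbb{P})$, the maximum taken over the potential set, by the standard envelope (Danskin) argument for the dual problem. Minimality of $\mu_\mathbb{P}$ then gives
\[
\int\max_\psi\int\psi\,\d(\mu-\mu_\mathbb{P})\,\d\mathbb{P}(\rho)\ge0.
\]
This is the hard step. Uniqueness of potentials would settle it, but it is not available for all $\rho$, since irregular targets are allowed. I would therefore use a minimax argument instead, as in \cite{zbMATH07819904}. Consider the convex set $\mathcal C$ of functions $\int\psi_\rho\,\d\mathbb{P}$, where $\psi_\rho$ ranges over measurable selections. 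This set is convex and compact in $C(\Omega)$. Suppose no element of $\mathcal C$ is constant on $\operatorname{spt}\mu_\mathbb{P}$ in the sense that $\int f\,\d(\mu-\mu_\mathbb{P})\ge0$ for all $\mu$. Hahn--Banach separation between $\mathcal C$ and the cone $\{f:\ \int f\,\d(\mu-\mu_\mathbb{P})\le 0\ \forall\mu\}$ then produces a measure $\mu$ with $\sup_{f\in\mathcal C}\int f\,\d(\mu-\mu_\mathbb{P})<0$. Taking the pointwise maximizing selection, again measurable by Step 1 applied to the argmax, contradicts the inequality above. So some selection $\psi_\rho$ satisfies
\[
\int\psi_\rho\,\d\mathbb{P}\ge \min_\Omega\Bigl(\int\psi_\rho\,\d\mathbb{P}\Bigr)=:\lambda,
\]
with equality on $\operatorname{spt}\mu_\mathbb{P}$.

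\textbf{Step 3: upgrading to equality everywhere.} The function $\bar\psi$ is now $\ge\lambda$ on $\Omega$ and $=\lambda$ $\mu_\mathbb{P}$-a.e. Replace each $\psi_\rho$ by
\[
\hat\psi_\rho:=\psi_\rho-\frac{\bar\psi-\lambda}{1}\,\chi,
\]
choosing the correction only on one fixed $\rho$-independent component. More simply, set $\hat\psi_\rho:=\psi_\rho-(\bar\psi-\lambda)$. Then $\hat\psi_\rho\le\psi_\rho$ everywhere with equality on $\operatorname{spt}\mu_\mathbb{P}$, so $\hat\psi_\rho^c\ge\psi_\rho^c$ and the dual value is unchanged. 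Since $\hat\psi_\rho(x)+\hat\psi_\rho^c(y)\le c$ need not hold, I instead replace $\hat\psi_\rho$ by $\hat\psi_\rho^{cc}$ restricted appropriately, or verify $\hat\psi_\rho(y)+\psi_\rho^c(x)\le c(x,y)$ directly, which holds because $\hat\psi_\rho\le\psi_\rho$. The pair $(\psi_\rho^c,\hat\psi_\rho)$ is admissible and attains $W_2^2$, hence is optimal, and $\phi_\rho=\hat\psi_\rho^c$ is still optimal. Finally, subtracting the constant $\lambda$ from one coordinate, with a compensating shift $+\lambda$ in $\phi$, yields $\int\hat\psi_\rho\,\d\mathbb{P}\equiv0$ on $\Omega$, which proves (i) and (ii).
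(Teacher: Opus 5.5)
Your route is genuinely different from the paper's and can be made to work, but Step~2 as written has real gaps.

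\textbf{How the two routes differ.} The paper never differentiates $\Var_\mathbb{P}$. It builds the balance into the constraint of a convex dual problem, namely $H(\varphi)$ with $\int\psi_\rho\,\d\mathbb{P}=\varphi$. It identifies $-H(0)$ with $\min\Var_\mathbb{P}$ via $H^{**}(0)=H(0)$ and the Agueh--Carlier formula for $H^*$. It then extracts a balanced maximizer from a normalized maximizing sequence by Koml\'os' theorem, and reads off optimality of each pair from equality in the integrated duality inequality. You instead derive the Euler--Lagrange condition $0\in\int\partial W_2^2(\cdot,\rho)\,\d\mathbb{P}(\rho)+N_{\mathcal P(\Omega)}(\mu_\mathbb{P})$ at the given minimizer, select by minimax, and correct. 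The Euler--Lagrange step uses Danskin for each $\rho$; moving the right derivative under $\int\d\mathbb{P}$ is justified by monotone convergence of the convex difference quotients, which you should state. Your Step~3 is right and is the same device as the paper's subtraction of $\alpha_m$. Removing a nonnegative, $\rho$-independent function that vanishes on $\operatorname{spt}\mu_\mathbb{P}$ raises $\psi_\rho^c$ and leaves $\int\psi_\rho\,\d\mu_\mathbb{P}$ unchanged. Your admissibility worry is unfounded: $(\hat\psi_\rho^c,\hat\psi_\rho)$ is admissible by definition of the $c$-transform.

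\textbf{Gaps in Step~2.}

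(a) The set of $c$-concave optimal potentials is not convex, so $\mathcal C$ built from $c$-concave selections need not be convex. Use instead $\Psi(\rho):=\{\psi:\Lip\psi\le\diam(\Omega),\ \psi(y_0)=0,\ (\psi^c,\psi)\text{ optimal}\}$. It is convex because $(\tfrac{\psi_0+\psi_1}{2})^c\ge\tfrac{\psi_0^c+\psi_1^c}{2}$, it is compact, and Danskin's formula holds over it as well.

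(b) More seriously, compactness of $\mathcal C$ is asserted, not proved. Containment in an Arzel\`a--Ascoli set is easy. Closedness of this Aumann integral in $C(\Omega)$ is not: you must produce, from selections whose integrals converge, a selection whose integral is the limit. That is exactly the Koml\'os argument of the paper's Steps~4--5, or weak compactness of integrable selections in $L^1(\mathbb{P};C(\Omega))$. So the analytic core of the paper's proof is relocated, not avoided.

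(c) The separation step fails as written. The cone must be $\mathcal A=\{f:\int f\,\d(\mu-\mu_\mathbb{P})\ge0\ \forall\mu\}$, not the one with $\le 0$. Moreover, a functional strictly separating $\mathcal C$ from $\mathcal A$ is only known to be a measure $\nu$ with $\nu(\Omega)=0$ and $\nu^-$ concentrated on $\operatorname{spt}\mu_\mathbb{P}$. An example is $\delta_a-\delta_b$ with $b$ a non-atom of $\mu_\mathbb{P}$, which is not of the form $t(\mu-\mu_\mathbb{P})$ with $\mu\in\mathcal P(\Omega)$. So your contradiction does not follow without a further approximation argument. The clean fix is Sion's minimax theorem for $(f,\mu)\mapsto\int f\,\d(\mu_\mathbb{P}-\mu)$ on $\mathcal C\times\mathcal P(\Omega)$. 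Using the compactness from (b) and your measurable argmax selection, it gives directly some $f\in\mathcal C$ with $\int f\,\d\mu_\mathbb{P}\le\min_\Omega f$.

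With these repairs your argument is a valid alternative. It has the merit of exhibiting the balance condition transparently as first-order optimality of $\mu_\mathbb{P}$. The cost is Danskin, a measurable argmax selection and a minimax theorem, on top of the same compactness input the paper needs anyway.
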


\begin{proof}
We divide the proof into six steps.

\medskip
\noindent\textbf{Step 1: Definition and convexity of $H$.}
For $\varphi\in C(\Omega)$ define
\[
H(\varphi):=\inf\Bigl\{-\int_{\mathcal{P}(\Omega)}\E_\rho(\psi_\rho^c)\,\d\mathbb{P}(\rho):
(\psi_\rho)_\rho\in L^1(\mathbb{P};C(\Omega)),\ \int_{\mathcal{P}(\Omega)}\psi_\rho(\cdot)\,\d\mathbb{P}(\rho)=\varphi(\cdot)\Bigr\}.
\]
Since $\psi\mapsto -\psi^c$ is convex (as a supremum of affine functions), $H$ is convex. 
Fix $y_0\in\Omega$. For any admissible $(\psi_\rho)_\rho$, denote $D:=\diam(\Omega)$,
\[
-\psi_\rho^c(x)=\sup_{y\in\Omega}\{\psi_\rho(y)-c(x,y)\}\geq \psi_\rho(y_0)-\tfrac12D^2,
\]
whence $H(\varphi)\geq \varphi(y_0)-\tfrac12D^2>-\infty$. 
If $\|\varphi\|_{L^\infty}\leq 1$, taking $\psi_\rho\equiv\varphi$ yields $H(\varphi)\leq 1$. 
Thus $H$ is proper and bounded above on a neighbourhood of $0$; by standard convex analysis (cf.~\cite[Proposition~2.5]{zbMATH03504682}) it is therefore lower semicontinuous, and the Fenchel--Moreau theorem gives $H^{**}(0)=H(0)$.

\medskip
\noindent\textbf{Step 2: The barycenter as the convex conjugate.}
For $\mu\in\mathcal{P}(\Omega)$, Fubini--Tonelli and Kantorovich duality yield
\begin{equation}\label{eq:Hstar}
\begin{aligned}
H^*(\mu)
&=\sup_{\varphi\in C(\Omega)}\Bigl\{\E_\mu(\varphi)-H(\varphi)\Bigr\}\\
&=\sup_{(\psi_\rho)_\rho}\int_{\mathcal{P}(\Omega)}\bigl(\E_\mu(\psi_\rho)+\E_\rho(\psi_\rho^c)\bigr)\,\d\mathbb{P}(\rho)
\leq \int_{\mathcal{P}(\Omega)}W_2^2(\mu,\rho)\,\d\mathbb{P}(\rho).
\end{aligned}
\end{equation}
Conversely, by the Kuratowski--Ryll--Nardzewski measurable selection theorem there exists a $\mathbb{P}$-measurable map $\rho\mapsto\psi_\rho^\mu$ such that $((\psi_\rho^\mu)^c,\psi_\rho^\mu)$ is optimal for $(\rho,\mu)$. Centering $\tilde\psi_\rho^\mu:=\psi_\rho^\mu-\psi_\rho^\mu(y_0)$ preserves optimality and places $(\tilde\psi_\rho^\mu)_\rho\in L^1(\mathbb{P};C(\Omega))$. Substituting this family in \eqref{eq:Hstar} shows equality:
\[
H^*(\mu)=\int_{\mathcal{P}(\Omega)} W_2^2(\mu,\rho)\,\d\mathbb{P}(\rho).
\]
Moreover, by \cite[Lemma~2.1]{zbMATH05956494},
\[
H^{**}(0)=\sup_{\nu\in C(\Omega)^*}\{-H^*(\nu)\}
=\sup_{\mu\in\mathcal{P}(\Omega)}\{-H^*(\mu)\}
=-\inf_{\mu\in\mathcal{P}(\Omega)}H^*(\mu).
\]
Since $\mu_{\mathbb{P}}$ is the barycenter, we obtain
\begin{equation}\label{eq:Hzero}
-H(0)=H^{**}(0)=\inf_{\mu\in\mathcal{P}(\Omega)}\int_{\mathcal{P}(\Omega)} W_2^2(\mu,\rho)\,\d\mathbb{P}(\rho)
=\int_{\mathcal{P}(\Omega)} W_2^2(\mu_{\mathbb{P}},\rho)\,\d\mathbb{P}(\rho).
\end{equation}

\medskip
\noindent\textbf{Step 3: Normalization of a maximizing sequence.}
Let $\{(\psi_\rho^m)_\rho\}_{m\in\N}\subset L^1(\mathbb{P};C(\Omega))$ satisfy $\int_{\mathcal{P}(\Omega)}\psi_\rho^m(\cdot)\,\d\mathbb{P}(\rho)=0$ and
\[
\int_{\mathcal{P}(\Omega)}\E_\rho\bigl((\psi_\rho^m)^c\bigr)\,\d\mathbb{P}(\rho)\longrightarrow -H(0).
\]
Set $\hat\psi_\rho^m:=(\psi_\rho^m)^{\mathrm{cc}}$. Then $(\hat\psi_\rho^m)^c=(\psi_\rho^m)^c$ and $\hat\psi_\rho^m\geq\psi_\rho^m$. Define
\[
\alpha_m(y):=\int_{\mathcal{P}(\Omega)}\hat\psi_\rho^m(y)\,\d\mathbb{P}(\rho)\geq 0,
\qquad\bar\psi_\rho^m:=\hat\psi_\rho^m-\alpha_m.
\]
Then $\int\bar\psi_\rho^m\,\d\mathbb{P}=0$. Since the $c$-transform is order-reversing and $\bar\psi_\rho^m\leq\hat\psi_\rho^m$, we have $(\bar\psi_\rho^m)^c\geq(\hat\psi_\rho^m)^c=(\psi_\rho^m)^c$, hence the dual value does not decrease upon replacing $\psi^m$ by $\bar\psi^m$.

Next center at $y_0$: $\tilde\psi_\rho^m:=\bar\psi_\rho^m-\bar\psi_\rho^m(y_0)$. Then $\tilde\psi_\rho^m(y_0)=0$, $\int\tilde\psi_\rho^m\,\d\mathbb{P}=0$, and $(\tilde\psi_\rho^m)^c=(\bar\psi_\rho^m)^c+\bar\psi_\rho^m(y_0)$. Integrating and using $\int\bar\psi_\rho^m(y_0)\,\d\mathbb{P}=0$ shows the dual value is unchanged. Thus $\{(\tilde\psi_\rho^m)_\rho\}$ is still maximizing. Since $\hat\psi_\rho^m$ and $\alpha_m$ are $D$-Lipschitz, every $\tilde\psi_\rho^m$ is $2D$-Lipschitz and satisfies $\|\tilde\psi_\rho^m\|_{L^\infty}\leq 2D^2$.

\medskip
\noindent\textbf{Step 4: Compactness and extraction of a limit.}
Let
\[
K:=\{f\in C(\Omega):f(y_0)=0,\ \Lip(f)\leq 2D\}.
\]
By the Arzel\`a--Ascoli theorem, $K$ is compact in $(C(\Omega),\|\cdot\|_{L^\infty})$. Fix a dense set $\{y_j\}_{j\in\N}\subset\Omega$ containing $y_0$ and set $u_j^m(\rho):=\tilde\psi_\rho^m(y_j)$. Each sequence $\{u_j^m\}_{m}$ is bounded in $L^1(\mathbb{P})$. By the Koml\'os theorem and a standard diagonal argument, there exists a subsequence (still indexed by $m$) and functions $u_j\in L^1(\mathbb{P})$ such that the Ces\`aro means
\[
\check u_j^M:=\frac1M\sum_{m=1}^M u_j^m\longrightarrow u_j\quad\mathbb{P}\text{-a.e.},\qquad\forall j\in\N.
\]
Define $\check\psi_\rho^M:=\frac1M\sum_{m=1}^M\tilde\psi_\rho^m\in K$. For $\mathbb{P}$-a.e.\ $\rho$, the functions $\check\psi_\rho^M$ are equi-Lipschitz and converge pointwise on the dense set $\{y_j\}$; hence they converge uniformly to a unique $2D$-Lipschitz limit $\tilde\psi_\rho\in K$ with $\tilde\psi_\rho(y_0)=0$.

\medskip
\noindent\textbf{Step 5: The limit is admissible and optimal.}
The uniform convergence gives
\[
\lim_{M\to\infty}\|\check\psi_\rho^M-\tilde\psi_\rho\|_{L^\infty(\Omega)}=0
\quad\text{for }\mathbb{P}\text{-a.e. }\rho.
\]
Since each $\rho\mapsto\check\psi_\rho^M$ is $\mathbb{P}$-measurable and $C(\Omega)$ is separable, the limit map $\rho\mapsto\tilde\psi_\rho$ is also $\mathbb{P}$-measurable; boundedness by $2D^2$ yields $(\tilde\psi_\rho)_\rho\in L^1(\mathbb{P};C(\Omega))$. By dominated convergence,
\[
\int_{\mathcal{P}(\Omega)}\tilde\psi_\rho(y)\,\d\mathbb{P}(\rho)
=\lim_{M\to\infty}\int_{\mathcal{P}(\Omega)}\check\psi_\rho^M(y)\,\d\mathbb{P}(\rho)=0,\qquad\forall y\in\Omega.
\]
Moreover, $\|(\check\psi_\rho^M)^c-(\tilde\psi_\rho)^c\|_{L^\infty}\leq\|\check\psi_\rho^M-\tilde\psi_\rho\|_{L^\infty}\to0$, and by concavity of the $c$-transform together with Jensen's inequality,
\[
(\check\psi_\rho^M)^c=\Bigl(\frac1M\sum_{m=1}^M\tilde\psi_\rho^m\Bigr)^c
\geq\frac1M\sum_{m=1}^M(\tilde\psi_\rho^m)^c
\geq\frac1M\sum_{m=1}^M(\psi_\rho^m)^c.
\]
Integrating in $\rho$ and letting $M\to\infty$ yields
\[
\int_{\mathcal{P}(\Omega)}\E_\rho\bigl((\tilde\psi_\rho)^c\bigr)\,\d\mathbb{P}(\rho)\geq -H(0).
\]
Since $-H(0)$ is the supremum, equality holds; thus $(\tilde\psi_\rho)_\rho$ is a maximizer.

\medskip
\noindent\textbf{Step 6: Identification of Kantorovich potentials.}
By \eqref{eq:Hzero} and optimality of $\tilde\psi$,
\[
\int_{\mathcal{P}(\Omega)} W_2^2(\mu_{\mathbb{P}},\rho)\,\d\mathbb{P}(\rho)
=\int_{\mathcal{P}(\Omega)}\E_\rho\bigl((\tilde\psi_\rho)^c\bigr)\,\d\mathbb{P}(\rho).
\]
Kantorovich duality gives, for $\mathbb{P}$-a.e.\ $\rho$,
\[
W_2^2(\mu_{\mathbb{P}},\rho)\geq \E_\rho\bigl((\tilde\psi_\rho)^c\bigr)+\E_{\mu_{\mathbb{P}}}(\tilde\psi_\rho).
\]
Integrating in $\rho$ and using the balance condition $\int\tilde\psi_\rho(y)\,\d\mathbb{P}(\rho)=0$ (so that $\int\E_{\mu_{\mathbb{P}}}(\tilde\psi_\rho)\,\d\mathbb{P}=0$), we obtain
\[
\int_{\mathcal{P}(\Omega)} W_2^2(\mu_{\mathbb{P}},\rho)\,\d\mathbb{P}(\rho)
\geq\int_{\mathcal{P}(\Omega)}\E_\rho\bigl((\tilde\psi_\rho)^c\bigr)\,\d\mathbb{P}(\rho).
\]
As the two sides are equal, the pointwise inequality must be an equality for $\mathbb{P}$-a.e.\ $\rho$. Consequently $(\tilde\phi_\rho,\tilde\psi_\rho)$ with $\tilde\phi_\rho:=(\tilde\psi_\rho)^c$ is a pair of Kantorovich potentials for $(\rho,\mu_{\mathbb{P}})$, completing the proof.
\end{proof}
\end{appendices}

\addcontentsline{toc}{section}{References}

\providecommand{\href}[2]{#2}

\bigskip

\noindent\textbf{Declaration.} The authors declare that they have no conflict of interest and that the manuscript has no associated data. 
During the preparation of this manuscript, the authors used Kimi (Moonshot AI) and ChatGPT (OpenAI) for language polishing and stylistic refinement. 
All mathematical contents were thoroughly reviewed and verified by the authors.

\begin{thebibliography}{LPRS23}

\bibitem[AC11]{zbMATH05956494}
Martial Agueh and Guillaume Carlier, \emph{Barycenters in the {Wasserstein} space}, SIAM J. Math. Anal. \textbf{43} (2011), no.~2, 904--924, \href{https://doi.org/10.1137/100805741}{doi:10.1137/100805741}.

\bibitem[ACLP20]{AhidarLeGouicParis2020}
Adil Ahidar-Coutrix, Thibaut Le Gouic and Quentin Paris, \emph{Convergence rates for empirical barycenters in metric spaces: curvature, convexity and extendable geodesics}, Probab. Theory Related Fields \textbf{177} (2020), no.~1--2, 323--368, \href{https://doi.org/10.1007/s00440-019-00950-0}{doi:10.1007/s00440-019-00950-0}.

\bibitem[AB21]{AltschulerBoixAdsera2021}
Jason M. Altschuler and Enric Boix-Adser\`a, \emph{{Wasserstein} barycenters can be computed in polynomial time in fixed dimension}, J. Mach. Learn. Res. \textbf{22} (2021), no.~44, 1--19.



\bibitem[Ber08]{zbMATH05349267}
J\'er\^ome Bertrand, \emph{Existence and uniqueness of optimal maps on {Alexandrov} spaces}, Adv. Math. \textbf{219} (2008), no.~3, 838--851, \href{https://doi.org/10.1016/j.aim.2008.06.001}{doi:10.1016/j.aim.2008.06.001}.

\bibitem[BBI01]{BuragoBuragoIvanov2001}
Dmitri Burago, Yuri Burago and Sergei Ivanov, \emph{A course in metric geometry}, Graduate Studies in Mathematics, vol.~33, American Mathematical Society, Providence, RI, 2001.

\bibitem[CDM24]{zbMATH07819904}
Guillaume Carlier, Alex Delalande and Quentin M\'erigot, \emph{Quantitative stability of barycenters in the {Wasserstein} space}, Probab. Theory Related Fields \textbf{188} (2024), no.~3--4, 1257--1286, \href{https://doi.org/10.1007/s00440-023-01241-5}{doi:10.1007/s00440-023-01241-5}.

\bibitem[CD14]{CuturiDoucet2014}
Marco Cuturi and Arnaud Doucet, \emph{Fast computation of {Wasserstein} barycenters}, Proceedings of the 31st International Conference on Machine Learning, Proceedings of Machine Learning Research, vol.~32, PMLR, 2014, pp.~685--693.

\bibitem[DG85]{DevroyeGyorfi1985}
Luc Devroye and L\'aszl\'o Györfi, \emph{Nonparametric density estimation: the $L_1$ view}, Wiley Series in Probability and Mathematical Statistics, John Wiley \& Sons, New York, 1985.

\bibitem[ET76]{zbMATH03504682}
Ivar Ekeland and Roger Temam, \emph{Convex analysis and variational problems}, Studies in Mathematics and its Applications, vol.~1, North-Holland, Amsterdam, 1976.

\bibitem[GKO13]{GigliKuwadaOhta2013}
Nicola Gigli, Kazumasa Kuwada and Shin-ichi Ohta, \emph{Heat flow on {Alexandrov} spaces}, Comm. Pure Appl. Math. \textbf{66} (2013), no.~3, 307--331, \href{https://doi.org/10.1002/cpa.21431}{doi:10.1002/cpa.21431}.

\bibitem[HLZ25]{arXiv:2412.01190}
Bang-Xian Han, Deng-Yu Liu and Zhuo-Nan Zhu, \emph{On the geometry of {Wasserstein} barycenter {I}}, preprint, arXiv:2412.01190 [math.MG], 2025.

\bibitem[HZ26]{arXiv:2602.19175}
Bang-Xian Han and Zhuo-Nan Zhu, \emph{Stability of optimal transport on metric measure spaces}, preprint, arXiv:2602.19175 [math.MG], 2026.

\bibitem[Jia17]{zbMATH06797407}
Yin Jiang, \emph{Absolute continuity of {Wasserstein} barycenters over {Alexandrov} spaces}, Canad. J. Math. \textbf{69} (2017), no.~5, 1087--1108, \href{https://doi.org/10.4153/CJM-2016-035-8}{doi:10.4153/CJM-2016-035-8}.

\bibitem[KP17]{KimPass2017}
Young-Heon Kim and Brendan Pass, \emph{{Wasserstein} barycenters over {Riemannian} manifolds}, Adv. Math. \textbf{307} (2017), 640--683, \href{https://doi.org/10.1016/j.aim.2016.11.026}{doi:10.1016/j.aim.2016.11.026}.

\bibitem[LL17]{LeGouicLoubes2017}
Thibaut Le Gouic and Jean-Michel Loubes, \emph{Existence and consistency of {Wasserstein} barycenters}, Probab. Theory Related Fields \textbf{168} (2017), no.~3--4, 901--917, \href{https://doi.org/10.1007/s00440-016-0727-z}{doi:10.1007/s00440-016-0727-z}.

\bibitem[LPRS23]{LeGouicParisRigolletStromme2023}
Thibaut Le Gouic, Quentin Paris, Philippe Rigollet and Austin J. Stromme, \emph{Fast convergence of empirical barycenters in {Alexandrov} spaces and the {Wasserstein} space}, J. Eur. Math. Soc. (JEMS) \textbf{25} (2023), no.~6, 2229--2250, \href{https://doi.org/10.4171/JEMS/1234}{doi:10.4171/JEMS/1234}.

\bibitem[McD89]{zbMATH01036755}
Colin McDiarmid, \emph{On the method of bounded differences}, Surveys in Combinatorics, 1989 (Norwich, 1989), London Math. Soc. Lecture Note Ser., vol.~141, Cambridge Univ. Press, Cambridge, 1989, pp.~148--188.

\bibitem[Oht12]{Ohta2012}
Shin-ichi Ohta, \emph{Barycenters in {Alexandrov} spaces of curvature bounded below}, Adv. Geom. \textbf{12} (2012), no.~4, 571--587, \href{https://doi.org/10.1515/advgeom-2011-058}{doi:10.1515/advgeom-2011-058}.

\bibitem[OS94]{zbMATH00606998}
Yukio Otsu and Takashi Shioya, \emph{The {Riemannian} structure of {Alexandrov} spaces}, J. Differential Geom. \textbf{39} (1994), no.~3, 629--658.

\bibitem[PZ19]{PanaretosZemel2019}
Victor M. Panaretos and Yoav Zemel, \emph{Statistical aspects of {Wasserstein} distances}, Annu. Rev. Stat. Appl. \textbf{6} (2019), 405--431, \href{https://doi.org/10.1146/annurev-statistics-030718-104938}{doi:10.1146/annurev-statistics-030718-104938}.

\bibitem[Pet07]{zbMATH05342782}
Anton Petrunin, \emph{Semiconcave functions in {Alexandrov}'s geometry}, Surveys in Differential Geometry, Vol.~XI, Int. Press, Somerville, MA, 2007, pp.~137--201.

\bibitem[Pet11]{zbMATH06032507}
Anton Petrunin, \emph{{Alexandrov} meets {Lott}-{Villani}-{Sturm}}, M\"unster J. Math. \textbf{4} (2011), 53--64.

\bibitem[PC19]{PeyreCuturi2019}
Gabriel Peyr\'e and Marco Cuturi, \emph{Computational optimal transport: with applications to data science}, Found. Trends Mach. Learn. \textbf{11} (2019), no.~5--6, 355--607, \href{https://doi.org/10.1561/2200000073}{doi:10.1561/2200000073}.

\bibitem[RPDB12]{RabinPeyreDelonBernot2012}
Julien Rabin, Gabriel Peyr\'e, Julie Delon and Marc Bernot, \emph{{Wasserstein} barycenter and its application to texture mixing}, Scale Space and Variational Methods in Computer Vision, Lecture Notes in Computer Science, vol.~6667, Springer, Berlin, 2012, pp.~435--446, \href{https://doi.org/10.1007/978-3-642-24785-9_37}{doi:10.1007/978-3-642-24785-9\_37}.

\bibitem[San15]{Santambrogio2015}
Filippo Santambrogio, \emph{Optimal transport for applied mathematicians}, Progress in Nonlinear Differential Equations and their Applications, vol.~87, Birkh\"auser, Basel, 2015.

\bibitem[Var58]{Varadarajan58}
V. S. Varadarajan, \emph{On the convergence of sample probability distributions}, Sankhy\=a \textbf{19} (1958), no.~1--2, 23--26.


\end{thebibliography}
\end{document}